\newtheorem{theorem}{Theorem}[section]
\newtheorem{lemma}[theorem]{Lemma}
\newtheorem{proposition}[theorem]{Proposition}
\newtheorem{corollary}[theorem]{Corollary}
\newtheorem{claim}[theorem]{Claim}
\newenvironment{proof}[1][Proof]{\noindent\textbf{#1.} }{\ \rule{0.5em}{0.5em}}
\newcommand{\R}{\mathbb{R}}
\newcommand{\Z}{\mathbb{Z}}
\newcommand{\N}{\mathbb{N}}
\newcommand{\M}{\mathcal{M}}
\begin{document}

\title{\bf Existence of ground state solution and concentration of maxima for a class of indefinite variational problems}

\author{Claudianor O. Alves\thanks{C. O. Alves was partially supported by CNPq/Brazil 304804/2017-7 and INCT-MAT, coalves@mat.ufcg.edu.br}\, , \, \ Geilson F. Germano \thanks{G. F. Germano was partially supported by CAPES, geilsongermano@hotmail.com}\vspace{2mm}
	\and {\small  Universidade Federal de Campina Grande} \\ {\small Unidade Acad\^emica de Matem\'{a}tica} \\ {\small CEP: 58429-900, Campina Grande - Pb, Brazil}\\}

\date{}
\maketitle

\begin{abstract} 
In this paper we study the existence of ground state solution  and concentration of maxima for a class of  strongly indefinite problem like
$$
\left\{\begin{array}{l}
-\Delta u+V(x)u=A(\epsilon x)f(u) \quad \mbox{in} \quad  \R^{N}, \\
u\in H^{1}(\R^{N}),
\end{array}\right. \eqno{(P)_{\epsilon}}
$$
where $N \geq 1$, $\epsilon$ is a positive parameter, $f: \mathbb{R} \to \mathbb{R}$ is a continuous function with subcritical growth and $V,A: \mathbb{R}^{N} \to \mathbb{R}$ are continuous functions verifying some technical conditions. Here $V$ is a $\mathbb{Z}^N$-periodic function, $0 \not\in \sigma(-\Delta + V)$, the spectrum of $-\Delta +V$,  and  
$$
0 < \inf_{x \in \R^{N}}A(x)\leq \displaystyle\lim_{|x|\rightarrow+\infty}A(x)<\sup_{x \in \R^{N}}A(x). 
$$

	\vspace{0.3cm}
	
	\noindent{\bf Mathematics Subject Classifications (2010):} 35B40, 35J20, 47A10.
	
	\vspace{0.3cm}
	
	\noindent {\bf Keywords:}   concentration of maxima, variational methods, indefinite strongly functional.
\end{abstract}

\section{Introduction}

This paper concerns with the existence of ground state solution  and concentration of maxima for the following semilinear Schr\"odinger equation
$$
\left\{\begin{array}{l}
-\Delta u+V(x)u=A(\epsilon x)f(u) \quad \mbox{in} \quad  \R^{N}, \\
u\in H^{1}(\R^{N}),
\end{array}\right. \eqno{(P)_{\epsilon}}
$$
where $N \geq 1$, $\epsilon$ is a positive parameter, $f: \mathbb{R} \to \mathbb{R}$ is a continuous function with subcritical growth and $V,A: \mathbb{R} \to \mathbb{R}$ are continuous functions verifying some technical conditions. 

In whole this paper, $V$ is $\mathbb{Z}^N$-periodic with   
$$
0 \not\in \sigma(-\Delta + V), \quad \mbox{the spectrum of } \quad -\Delta +V,  \eqno{(V_1)}
$$
which becomes the problem strongly indefinite. Related to the function $A$, we assume that it is a continuous function satisfying 
$$
A(0)=\max_{x \in \mathbb{R}^N}A(x) \quad \mbox{and} \quad 0< A_0=\inf_{x \in \R^{N}}A(x)\leq \displaystyle\lim_{|x|\rightarrow+\infty}A(x)<\sup_{x \in \R^{N}}A(x). \eqno{(A_1)}
$$		

The present article has as first motivation some recent articles that have studied the existence of solution for related problems with $(P)_{\epsilon}$, more precisely for strongly indefinite problems of the type  
$$
\left\{\begin{array}{l}
-\Delta u+V(x)u=f(x,u), \quad \mbox{in} \quad  \R^{N}, \\
u\in H^{1}(\R^{N}).
\end{array}\right. \eqno{(P_1)}
$$
In \cite{Szulkin1}, Kryszewski and Szulkin  have studied the existence of solution for $(P_1)$ by supposing the condition $(V_1)$. Related to the function $f:\mathbb{R}^N \times \mathbb{R} \to \mathbb{R}$, they assumed that $f$ is continuous, $\mathbb{Z}^N$-periodic in $x$ with
$$
|f(x,t)| \leq c(|t|^{q-1}+|t|^{p-1}), \quad \forall t \in \mathbb{R} \quad \mbox{and} \quad x \in \mathbb{R}^N \eqno{(h_1)}
$$ 
and
$$
0<\alpha F(x,t) \leq tf(x,t) \quad \forall (x,t) \in \mathbb{R}^N \times \mathbb{R}  \setminus \{0\} , \quad F(x,t)=\int_{0}^{t}f(x,s)\,ds \eqno{(h_2)}
$$
for some $c>0$, $\alpha >2$ and $2<q<p<2^{*}$ where $2^{*}=\frac{2N}{N-2}$ if $N \geq 3$ and $2^{*}=+\infty$ if $N=1,2$. The above hypotheses guarantee that the energy functional associated with $(P_1)$  given by 
$$
J(u)=\frac{1}{2}\int_{\mathbb{R}^N}(|\nabla u|^{2}+V(x)|u|^{2}\,dx)-\int_{\mathbb{R}^N}F(x,u)\,dx, \,\, \forall u \in H^{1}(\R^N),
$$
is well defined and belongs to $C^{1}(H^{1}(\mathbb{R}^N), \mathbb{R})$. From $(V_1)$, there is an equivalent inner product $\langle \;\; , \;\; \rangle $  in $H^{1}(\mathbb{R}^N)$ such that 
$$
J(u)=\frac{1}{2}\|u^+\|^{2}-\frac{1}{2}\|u^-\|^{2} -\int_{\mathbb{R}^N}F(x,u)\,dx,
$$
where $\|u\|=\sqrt{\langle u,u \rangle}$ and  $H^{1}(\mathbb{R}^N) = E^{+} \oplus E^-$ corresponds to the spectral decomposition of $- \Delta + V $ with respect to the
positive and negative part of the spectrum with $u = u^{+}+u^{-}$, where $u^{+} \in E^{+}$ and $u^{-} \in E^{-}$.  In order to show the existence of solution for $(P_1)$, Kryszewski and Szulkin  introduced a new and interesting generalized link theorem. In \cite{LiSzulkin},  Li and Szulkin have improved the generalized link theorem obtained in \cite{Szulkin1} to prove the existence of solution for a class of strongly indefinite problem with $f$ being asymptotically linear at infinity.   

The link theorems above mentioned have been used in a lot of papers, we would like to cite   Chabrowski and Szulkin \cite{CS}, do \'O and Ruf \cite{DORUF},   Furtado and Marchi \cite{furtado}, Tang \cite{tang, tang2} and their references.

Pankov and Pfl\"uger \cite{Pankov-Pfluger} also have considered the existence of solution for problem $(P_1)$ with the same conditions considered in \cite{Szulkin1}, however the approach is based on an approximation technique of periodic function together with the linking theorem due to Rabinowitz \cite{Rabinowitz}. Later, Pankov \cite{Pankov} has studied the  existence of solution for problems of the type  
$$
\left\{\begin{array}{l}
-\Delta u+V(x)u=\pm f(x,u), \quad \mbox{in} \quad  \R^{N}, \\
u\in H^{1}(\R^{N}),
\end{array}\right. \eqno{(P_2)}
$$
by supposing $(V_1)$, $(h_1)-(h_2)$ and employing the same approach explored in \cite{Pankov-Pfluger}.  In   \cite{Pankov} and \cite{Pankov-Pfluger}, the existence of ground state solution has been established by supposing $f \in C^{1}(\mathbb{R}^N,\mathbb{R})$ and that there is $\theta \in (0,1)$ such that
$$
0<t^{-1}f(x,t)\leq \theta f'_t(x,t), \quad \forall t \not=0 \quad \mbox{and} \quad x \in \mathbb{R}^N. \eqno({h_3})
$$
In  \cite{Pankov}, Pankov found a ground state solution by minimizing the energy functional  $J$ on the set
$$
\mathcal{O}=\left\{u\in H^{1}(\R^{N})\setminus E^{-}\ ;\ J'(u)u=0\text{ and }J'(u)v=0,\forall\ v\in E^{-}\right\}. 
$$ 
The reader is invited to see that if $J$ is strongly definite, that is, when $E^{-}=\{0\}$,  the set $\mathcal{O}$ is exactly the Nehari manifold associated with $J$. Hereafter, we say that  $u_0 \in H^{1}(\mathbb{R}^{N})$ is a {\it ground state solution} if 
$$
J'(u_0)=0 \quad \mbox{and} \quad J(u_0)=\inf_{w \in \mathcal{O}}J(w).
$$

In \cite{SW}, Szulkin and Weth established the existence of ground state solution for problem $(P_1)$ by completing the study made in  \cite{Pankov}, in the sense that, they also minimize the energy functional on  $\mathcal{O}$, however they have used more weaker conditions on $f$, for example $f$ is continuous, $\mathbb{Z}^N$-periodic in $x$ and satisfies 
$$
|f(x,t)|\leq C(1+|t|^{p-1}), \;\; \forall t \in \mathbb{R} \quad \mbox{and} \quad x \in \mathbb{R}^N  \eqno{(h_4)}
$$ 
for some $C>0$ and $p \in (2,2^{*})$.
$$
f(x,t)=o(t) \,\,\, \mbox{uniformly in } \,\, x \,\, \mbox{as} \,\, |t| \to 0. \eqno{(h_5)}
$$
$$
F(x,t)/|t|^{2} \to +\infty \,\,\, \mbox{uniformly in } \,\, x \,\, \mbox{as} \,\, |t| \to +\infty, \eqno{(h_6)}
$$
and
$$
t\mapsto f(x,t)/|t| \,\,\, \mbox{is strictly increasing on} \,\,\, \mathbb{R} \setminus \{0\}. \eqno{(h_7)}
$$
The same approach was used by Zhang, Xu and Zhang \cite{ZXZ, ZXZ2} to study a class of indefinite and asymptotically periodic problem.

After a bibliography review, we have observed that there are no papers involving strongly indefinite problem whose the nonlinearity is of the form  
$$
f(x,t)=A(\epsilon x)f(t), \quad \forall x \in \mathbb{R}^N \quad \mbox{and} \quad \forall t \in \mathbb{R},
$$
with $A$ satisfying $(A_1)$ and $\epsilon >0$. The motivation to consider this type of nonlinearity comes from many studies involving the existence and concentration of standing-wave solutions for the nonlinear Schr\"{o}dinger
equation
$$
ih \displaystyle \frac{\partial \Psi}{\partial t}=-h^{2}\Delta
\Psi+(V(x)+E)\Psi-g(x,\Psi)\,\,\, \mbox{for all}\,\,\, x \in
\mathbb{R}^{N},\eqno{(NLS)}
$$
where $N \geq 1$, $h > 0$ is a parameter and $V,g$ are continuous functions verifying some conditions. This class of equation is one of the main objects of the quantum physics, because it appears in problems that involve nonlinear optics, plasma physics and condensed matter physics.

Knowledge of the solutions for the elliptic equation like
$$
\ \  \left\{
\begin{array}{l}
- h^{2} \Delta{u} + V(x)u=g(x,u)
\ \ \mbox{in} \ \ \mathbb{R}^{N},
\\
u \in H^{1}(\mathbb{R}^{N}),
\end{array}
\right.
\eqno{(S)_{h}}
$$
or equivalently
$$
\ \  \left\{
\begin{array}{l}
-\Delta{u} + V(h x)u=g(h x, u)
\ \ \mbox{in} \ \ \mathbb{R}^{N},
\\
u \in H^{1}(\mathbb{R}^{N}),
\end{array}
\right.
\eqno{(S')_{h}},
$$
has a great importance in the study of standing-wave solutions
of $(NLS)$, which is a solution of the form $\Psi(x,t)=e^{-itE/h}u(x)$. In recent years, the existence and concentration of
positive solutions for general semilinear elliptic equations
$(S)_h$ have been extensively studied, see for example, Floer and Weinstein \cite{FW}, Oh
\cite{O1,O2}, Rabinowitz \cite{rabinowitz}, Wang \cite{WX}, Ambrosetti and Malchiodi \cite{AM}, Ambrosetti, Badiale and Cingolani  \cite{ABC},  del Pino and Felmer \cite{DF1}  and their references.

In some of the above mentioned papers, the existence,  multiplicity and concentration of positive solutions have been obtained in connection with the geometry of the potential $V$ by supposing that  
$$
\inf(\sigma(-\Delta + V))>0.
$$
By using the above condition, in some cases, it is possible to prove that the energy functional satisfies the mountain pass geometry, and that the mountain pass level is a critical level. In some papers it was proved that the maximum points of the solutions are close to the set
$$
\mathcal{V} = \left\{x \in \mathbb{R}^{N} \ : \ V(x) = \min_{z \in \mathbb{R}^{N}}V(z) \right\},
$$
when $h$ is small enough. Moreover, in a lot of articles, the multiplicity of solutions is associated with the topology richness of $\mathcal{V}$.

In \cite{rabinowitz}, by a mountain pass argument,  Rabinowitz proved the
existence of positive solutions of $(S)_{h}$, for $h > 0$ small, with $g(x,t)=g(t)$ whenever
$$
\liminf_{|x| \rightarrow \infty} V(x) > \inf_{x \in
	\mathbb{R}^N}V(x)=V_{0} >0. 
$$
Later Wang \cite{WX} showed that these solutions concentrate at global minimum points of $V$ as  $h$ tends to 0.

In \cite{DF1}, del Pino and Felmer have found solutions that concentrate around local minimum of $V$ by introducing of a penalization method.
More precisely, they assume that
$$
V(x)\geq \inf_{z \in \mathbb{R}^N}V(z)=V_{0}>0 \,\,\, \mbox{for all}\,\, \ x \in \mathbb{R}^{N}
$$
and the existence of an open and bounded set $\Omega \subset \mathbb{R}^{N}$ such that
$$
\inf_{x \in \Omega}V(x)< \min_{x \in
	\partial \Omega}V(x). 
$$

Here, we intend to study the existence of standing-wave solutions for $(NSL)$ by supposing $h=1$  and $g$ be a function of the type
$$
g(x,t)=A(\epsilon x)f(t),
$$
where $\epsilon$ is a positive number with $V,A$ satisfying the conditions $(V_1)$ and $(A_1)$ respectively. More precisely, we will prove the existence of ground state solution $u_\epsilon$ for $(P)_\epsilon$ when $\epsilon$ is small enough. After, we study the concentration of the maximum points of $|u_\epsilon|$ with related to the set of maximum points of $A$. We would like point out that one of the main difficulties is the loss of the mountain pass geometry, because we are working with a strongly indefinite problem. Then, if $I_\epsilon$ denotes the  energy functional associated with $(P)_\epsilon$, we were taken to do a careful study involving the behavior of number $c_{\epsilon}$ given by 
\begin{equation} \label{cepsilon}
c_\epsilon=\inf_{u \in \M_{\epsilon}}I_\epsilon(u)
\end{equation}
where
\begin{equation} \label{Mepsilon}
\M_{\epsilon}=\left\{u\in H^{1}(\R^{N})\setminus E^{-}\ ;\ I_{\epsilon}'(u)u=0\text{ and }I_{\epsilon}'(u)v=0,\forall\ v\in E^{-}\right\}. 
\end{equation}
The understanding of the behavior of $c_\epsilon$ is a key point in our approach to show the existence of ground state solution  and concentration of maxima when $\epsilon$ is small enough.

Hereafter, $f:\R \rightarrow\R$ is a continuous function that verifies the following assumptions: \\

\noindent $(f_1)$ \, $\displaystyle\frac{f(t)}{t}\rightarrow0$ as $t\rightarrow0$. \\

\noindent $(f_2)$ \, $\displaystyle\limsup_{|t|\rightarrow +\infty}\frac{|f(t)|}{|t|^{q}}<+\infty$ for some $q\in(1,2^{*}-1)$.

\noindent $(f_3)$ $t\mapsto {f(t)}/{t}$ is increasing on $(0,+\infty)$ and decreasing on $(-\infty,0)$.

\noindent $(f_4)$\, (Ambrosetti-Rabinowitz) There exists $\theta>2$ such that 
$$
0<\theta F(t)\leq f(t)t,\ \forall\ t\neq0
$$
where $F(t):=\int_{0}^{t}f(s)ds$. \\
	
Our main theorem is the following 
\begin{theorem} 
Suppose that  $(V_1), (A_1)$ and $(f_1)-(f_4)$ hold. Then,
there exists  $\epsilon_{0}> 0$ such that $(P)_{\epsilon}$ has a ground state solution $u_\epsilon$ for all $\epsilon \in (0,\epsilon_0)$. Moreover, if $x_{\epsilon} \in \mathbb{R}^{N}$ denotes a global maximum point of $|u_{\epsilon}|$, then
\[
\lim_{\epsilon \rightarrow 0}A(\epsilon x_{\epsilon}) = \sup_{x \in \mathbb{R}^N}A(x).
\]
\end{theorem}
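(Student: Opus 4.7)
The plan is to combine the Szulkin--Weth variational framework for strongly indefinite functionals with a concentration-compactness argument that exploits the slow oscillation of the coefficient $A(\epsilon x)$. Let $I_\epsilon$ denote the energy functional associated with $(P)_\epsilon$, written on $H^1(\R^N) = E^+ \oplus E^-$ according to $(V_1)$. Conditions $(f_1)$--$(f_4)$ together with $(A_1)$ guarantee that, for each fixed $\epsilon > 0$, the abstract hypotheses of \cite{SW} hold, so the number $c_\epsilon$ in \eqref{cepsilon} is strictly positive and admits the minimax characterization
$$
c_\epsilon \;=\; \inf_{u\in E^+\setminus\{0\}}\;\max_{w\in \R u\oplus E^-} I_\epsilon(w),
$$
the inner maximum being realized on $\M_\epsilon$ at a unique point.

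\textbf{Step 1 (level comparison).} For a constant $a > 0$ let $I^{a}$, $\M^{a}$, $c^{a}$ denote the analogous objects with $A(\epsilon x)$ replaced by $a$. Using $(f_3)$--$(f_4)$ one shows that the map $a \mapsto c^{a}$ is continuous and strictly decreasing, so $(A_1)$ yields $c^{A(0)} < c^{A_\infty}$, where $A_\infty := \lim_{|x|\to\infty}A(x)$. Since $A(\epsilon x) \le A(0) = \sup A$ on $\R^N$ and $F \ge 0$ by $(f_4)$, one has $I_\epsilon \ge I^{A(0)}$ pointwise, whence $c_\epsilon \ge c^{A(0)}$. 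A matching upper bound is produced by taking a ground state of $I^{A(0)}$ as a test function, projecting it onto $\M_\epsilon$, and using the uniform convergence $A(\epsilon\,\cdot\,) \to A(0)$ on compact sets. Together these give $c_\epsilon \to c^{A(0)}$, so $c_\epsilon < c^{A_\infty}$ for all $\epsilon$ sufficiently small.

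\textbf{Step 2 (existence).} Any minimizing sequence $(u_n) \subset \M_\epsilon$ for $c_\epsilon$ is bounded by $(f_4)$. The key technical point is a splitting argument below the threshold $c^{A_\infty}$: if strong convergence along $\Z^N$-translations fails, translations $u_n(\cdot + z_n)$ with $|z_n| \to \infty$ in $\Z^N$ produce a nontrivial weak limit that, in the limit, solves an autonomous problem with coefficient $A(y_0)$ (if $\epsilon z_n \to y_0$) or $A_\infty$ (if $|\epsilon z_n| \to \infty$), and the escaping energy is at least $c^{A_\infty}$. This is the main obstacle, because $A(\epsilon x)$ is not $\Z^N$-periodic and thus destroys the full translation invariance used in \cite{SW}; the decomposition must therefore be performed relative to the two scales $1$ and $1/\epsilon$. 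Since $c_\epsilon < c^{A_\infty}$, escape to infinity is precluded and compactness is recovered, giving a critical point $u_\epsilon$ of $I_\epsilon$ with $I_\epsilon(u_\epsilon) = c_\epsilon$. Elliptic regularity together with $(f_1)$--$(f_2)$ yield that $u_\epsilon$ is continuous and tends to zero at infinity, so $|u_\epsilon|$ attains its maximum at some $x_\epsilon \in \R^N$.

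\textbf{Step 3 (concentration).} Suppose, for contradiction, that along some $\epsilon_n \to 0$ one has $A(\epsilon_n x_{\epsilon_n}) \to b < A(0)$. Pick $y_n \in \Z^N$ with $|x_{\epsilon_n} - y_n| \le \sqrt{N}$ and set $v_n(x) := u_{\epsilon_n}(x + y_n)$. By the $\Z^N$-periodicity of $V$,
$$
-\Delta v_n + V(x)\, v_n \;=\; A(\epsilon_n x + \epsilon_n y_n)\, f(v_n).
$$
A standard elliptic bootstrap using $(f_1)$--$(f_2)$ forces $\|u_{\epsilon_n}\|_{L^\infty} \ge \delta > 0$ independently of $n$, since otherwise the equation would give $\|u_{\epsilon_n}\|_{H^1}\to 0$, contradicting $c_{\epsilon_n} \to c^{A(0)} > 0$. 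Therefore $v_n$ does not vanish near the origin and, passing to a subsequence, $v_n \rightharpoonup v \not\equiv 0$ in $H^1(\R^N)$. Passing to a further subsequence, either $\epsilon_n y_n \to y_0$ or $|\epsilon_n y_n|\to\infty$; let $a_\ast$ denote $A(y_0)$ or $A_\infty$ accordingly, so $a_\ast \le A(0)$, and $v$ is a nontrivial solution of $-\Delta v + V v = a_\ast f(v)$. Using the identity $I_{\epsilon_n}(u_{\epsilon_n}) = \int [\tfrac{1}{2} f(u_{\epsilon_n}) u_{\epsilon_n} - F(u_{\epsilon_n})]\, A(\epsilon_n x)\,dx$ (from $I_{\epsilon_n}'(u_{\epsilon_n}) u_{\epsilon_n} = 0$), the nonnegativity of the integrand guaranteed by $(f_4)$, and Fatou's lemma in the translated variables, one obtains
$$
c^{a_\ast} \;\le\; I^{a_\ast}(v) \;\le\; \liminf_{n\to\infty} I_{\epsilon_n}(u_{\epsilon_n}) \;=\; c^{A(0)}.
$$
Strict monotonicity of $a \mapsto c^{a}$ combined with $a_\ast \le A(0)$ forces $a_\ast = A(0)$. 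Since $x_{\epsilon_n} - y_n$ is bounded and $\epsilon_n \to 0$, this yields $A(\epsilon_n x_{\epsilon_n}) = A(\epsilon_n y_n) + o(1) \to a_\ast = \sup A$, contradicting the initial assumption and completing the proof.
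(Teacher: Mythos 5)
Your proposal is correct in its overall strategy and its skeleton coincides with the paper's: monotonicity and continuity of the autonomous level $a\mapsto c^{a}$ (the paper's Proposition \ref{continuidadec} for $d_\lambda$), the limit $c_\epsilon\to c^{A(0)}$ and the resulting threshold $c_\epsilon<c^{A_\infty}$ (Lemma \ref{limitce}, Corollary \ref{epsilonpequeno}), existence by excluding escape to infinity below that threshold, and concentration by translating, identifying the limiting coefficient $a_\ast$, and forcing $a_\ast=A(0)$ from $c^{a_\ast}\le c^{A(0)}$ and strict monotonicity. Where you genuinely diverge is the concentration step: the paper translates by the Lions concentration points $y_n$ of $u_n^{+}$, proves strong $H^1$ convergence $v_n\to v$ (Proposition \ref{convergenciaforte}, via Br\'ezis--Lieb and the $g_\tau,j_\tau$ splitting) and a uniform decay estimate (Corollary \ref{uniformementec0}), and only then shows the maximum points stay a bounded distance from $y_n$; you instead center the translation directly at integer points near the maxima and use a uniform $L^\infty$ lower bound at the maximum to get a nontrivial weak limit. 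This bypasses the strong-convergence and uniform-decay machinery and is a real economy, but note that it silently relies on the uniform local elliptic estimate of Lemma \ref{Regularidadeinfinito}: the pointwise bound $|v_n(p_n)|\ge\delta$ with $(p_n)$ bounded does \emph{not} by itself prevent $v_n\rightharpoonup 0$ (spiking), so you need uniform local $C^{0,\alpha}$ bounds to convert it into a uniform $L^2$ lower bound on a fixed ball. Two further places are understated. First, ``any minimizing sequence on $\M_\epsilon$ is bounded by $(f_4)$'' is false as stated in the strongly indefinite setting: the Ambrosetti--Rabinowitz identity only controls $\|u_n^{+}\|^{2}-\|u_n^{-}\|^{2}$, and boundedness is exactly the content of the paper's Proposition \ref{limitacao}, which needs Lions' lemma, translations and the superquadraticity of $F$. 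Second, in your energy comparison $c^{a_\ast}\le I^{a_\ast}(v)$ you should check $v\notin E^{-}$ (immediate from $J_{a_\ast}'(v)v=0$ and $f(t)t>0$), so that $v$ indeed lies in the generalized Nehari set of the limit problem. With those points made explicit, the argument is complete.
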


The plan of the paper is as follows: In Section 2 we  make a study involving the autonomous problem. In Section 3 it is showed the existence of ground state solution for $\epsilon$ small, while in Section 4 we establish the concentration of maxima.

\vspace{0.5 cm}

\noindent\textbf{Notation.} In this paper, we use the following notations:

\begin{itemize}
	\item $o_{n}(1)$ denotes a sequence that converges to zero.
	\item If $g$ is a mensurable function, the integral $\int_{\mathbb{R}^N}g(x)\,dx$ will be denoted by $\int g(x)\,dx$.	
	\item $B_{R}(z)$ denotes the open ball with center  $z$ and
	radius $R$ in $\mathbb{R}^N$.
	\item  The usual norms in $H^{1}(\R^N)$ and $L^{p}(\R^N)$ will be denoted by
		$\|\;\;\;\|_{H^{1}(\R^N)}$ and $|\;\;\;|_{p}$ respectively. 
	\item For each $u \in H^{1}(\R^N)$, the equality $u=u^+ + u^-$ yields $u^+ \in E^+$ and $u^- \in E^-$.	
\end{itemize}

\section{Some results involving the autonomous problem.}

Consider the following autonomous problem 
$$
\left\{\begin{array}{l}
-\Delta u+V(x)u=\lambda f(u)\quad \mbox{in} \quad  \R^{N}, \\ 
u\in H^{1}(\R^{N}),
\end{array}\right.\eqno{(AP)_{\lambda}}
$$
where $\lambda >0$ and $V, f$ verify the conditions $(V_1)$ and $(f_1)-(f_4)$ respectively.   Associated with $(AP)_{\lambda}$ we have the energy functional $J_\lambda:H^{1}(\mathbb{R}^N) \to \mathbb{R}$ given by 
$$
J_\lambda(u)=\frac{1}{2}\int (|\nabla u|^{2}+V(x)|u|^{2}\,dx)-\lambda \int F(u)\,dx, 
$$
or equivalently   
$$
J_\lambda(u)=\frac{1}{2}\|u^+\|^{2}-\frac{1}{2}\|u^-\|^{2} -\lambda \int F(u)\,dx.
$$
In what follows, let us denote by $d_\lambda$ the real number defined by
\begin{equation} \label{dlambda}
d_\lambda=\inf_{u \in \mathcal{N}_{\lambda}}J_\lambda(u);
\end{equation}
where
\begin{equation} \label{Nlambda}
\mathcal{N}_{\lambda}=\left\{u\in H^{1}(\R^{N})\setminus E^{-}\ ;\ J_{\lambda}'(u)u=0\text{ and }J_{\lambda}'(u)v=0,\forall\ v\in E^{-}\right\}. 
\end{equation}
Moreover, for each $u \in H^{1}(\R^N)$, the sets $E(u)$ and $\hat{E}(u)$ designate  
\begin{equation} \label{conjuntoE}
E(u)=E^{-}\oplus \R u\ \text{ and }\ \hat{E}(u)=E^{-}\oplus [0,+\infty)u.
\end{equation}

The reader is invited to observe that $E(u)$ and $\hat{E}(u)$ are independent of $\lambda$, more precisely they depend on only of the operator $-\Delta + V$. This remark is very important because these sets will be used in the next sections.

In \cite{SW}, Szulkin and Weth proved that for each $\lambda>0$, problem $(AP)_\lambda$ possesses a ground state solution $u_\lambda \in H^{1}(\R^N)$, that is, 
$$
u_\lambda \in \mathcal{N}_{\lambda}, \quad J_\lambda(u_\lambda)=d_\lambda \quad \mbox{and} \quad J_{\lambda}'(u)=0.
$$
In the above mentioned paper, the authors also proved that
\begin{equation} \label{positivo}
0<d_\lambda=\inf_{u\in E^{+}\setminus\{0\}}\max_{v\in\widehat{E}(u)}J_{\lambda}(u).
\end{equation}
Moreover, an interesting and important fact is that for each $u\in H^{1}(\R^{N})\setminus E^{-}$, $\mathcal{N}_\lambda\cap\hat{E}(u)$ is a singleton set and the element of this set is the unique global maximum of $J_{\lambda}|_{\hat{E}(u)}$, that is, there are $t^* \geq 0$ and $v^* \in E^{-}$ such that
\begin{equation} \label{maximo}
J_{\lambda}(t^*u+v^*)=\displaystyle\max_{w \in \widehat{E}(u)}J_{\lambda}(w).
\end{equation}

The next two lemmas will be used in the study of the behavior of $d_\lambda$ and $c_\epsilon$.

\begin{lemma}\label{translacao}
	For all $u=u^+ + u^-\in H^{1}(\R^{N})$ and $y\in\Z^{N}$, if $u_{y}(x):=u(x+y)$ then $u_{y}\in H^{1}(\R^{N})$ with $u_{y}^{+}(x)=u^{+}(x+y)$ and $u_{y}^{-}(x)=u^{-}(x+y)$.
\end{lemma}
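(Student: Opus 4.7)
The plan is to reduce the claim to the fact that, because $V$ is $\mathbb{Z}^N$-periodic, the unitary translation operators $T_y:u\mapsto u(\cdot+y)$ with $y\in\mathbb{Z}^N$ commute with $L:=-\Delta+V$, and therefore preserve the spectral subspaces $E^+$ and $E^-$ used to define the decomposition $H^{1}(\mathbb{R}^N)=E^+\oplus E^-$.

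First I would dispose of the easy part: a change of variables shows that translation by any $y\in\mathbb{R}^N$ is an isometry of $H^{1}(\mathbb{R}^N)$ (both the gradient and the $L^2$ norms are preserved), so $u_y\in H^{1}(\mathbb{R}^N)$ and $\|u_y\|_{H^1}=\|u\|_{H^1}$. The same change of variables shows $T_y$ is unitary on $L^{2}(\mathbb{R}^N)$.

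Next I would verify the commutation relation $T_y L = L T_y$ for $y\in\mathbb{Z}^N$. For a test function $\varphi$, since the Laplacian is translation invariant, $(-\Delta T_y\varphi)(x)=(-\Delta\varphi)(x+y)=T_y(-\Delta\varphi)(x)$. For the multiplication part, $V(x)(T_y\varphi)(x)=V(x)\varphi(x+y)$, while $T_y(V\varphi)(x)=V(x+y)\varphi(x+y)$, and these coincide precisely because $V$ is $\mathbb{Z}^N$-periodic. Extending from a core to the domain of $L$, I conclude $T_yL=LT_y$ on the appropriate domain. By the spectral theorem (or equivalently by functional calculus applied to the characteristic functions of $(0,\infty)$ and $(-\infty,0)$, recalling $0\notin\sigma(L)$ by $(V_1)$), $T_y$ commutes with the spectral projections $P^+$ and $P^-$ onto $E^+$ and $E^-$. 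Hence $T_y(E^{\pm})\subseteq E^{\pm}$; applying the same argument with $-y$ gives equality.

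Finally, write $u=u^++u^-$ with $u^\pm\in E^\pm$. Then $u_y=T_yu=T_yu^++T_yu^-$ with $T_yu^{\pm}\in E^{\pm}$ by the previous step. By the uniqueness of the decomposition $u_y=u_y^{+}+u_y^{-}$ into components in $E^+$ and $E^-$, we obtain $u_y^{+}=T_yu^{+}$ and $u_y^{-}=T_yu^{-}$, which is exactly the claim. The only delicate point is making the functional-calculus argument precise (one wants $T_y$ to commute not just with $L$ but with the spectral projections); this is standard once the commutation $T_yL=LT_y$ is established, since unitaries that commute with a self-adjoint operator commute with all its bounded Borel functions.
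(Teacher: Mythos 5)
Your proof is correct and follows essentially the same route as the paper: both arguments reduce the claim to showing that the translation operator $T_y$ preserves $E^{+}$ and $E^{-}$ and then invoke the uniqueness of the decomposition $u_y=u_y^{+}+u_y^{-}$. The only difference is that the paper dismisses the invariance of $E^{\pm}$ as ``a direct computation,'' whereas you supply the actual justification (commutation of $T_y$ with $-\Delta+V$ via the $\Z^{N}$-periodicity of $V$, hence with the spectral projections), which is a welcome filling-in of detail rather than a different method.
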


\begin{proof}
	Define
	$$\begin{array}{rcccl}
	T: & H^{1}(\R^{N}) & \rightarrow & H^{1}(\R^{N}) \\
	&     u         & \mapsto     & u_{y}
	\end{array}$$
	such that $u_{y}(x):=u(x+y)$ for all $x\in\R^{N}$. A direct computation gives $T(E^{+})\subset E^{+}$ and $T(E^{-})\subset E^{-}$. Consequently, 
	$$
	u(x+y)=u^{+}(x+y)+u^{-}(x+y)
	$$
or equivalently 
$$
T(u)=T(u^{+})+T(u^{-}).
$$
Since $T(u^{+})\in E^{+}$ and $T(u^{-})\in E^{-}$, we derive that $T(u)^{+}=T(u^{+})$ and $T(u)^{-}=T(u^{-})$, obtaining the desired result. 
\end{proof}

\vspace{0.5 cm}
The next lemma is a more weaker version of \cite[Lemma 2.5]{SW}.  

\begin{lemma}\label{adaptacao2.5}
Let  $\mathcal{V}\subset E^{+}\setminus\{0\}$ be a bounded set with $0\notin\overline{\mathcal{V}}^{\sigma(H^{1}(\R^{N}),H^{1}(\R^{N})')}$, that is, $0$ does not belong to the weak closure of $\mathcal{V}$. If  $W \in C(\mathbb{R}^N) \cap L^{\infty}(\mathbb{R}^N)$ with $\inf_{x \in \R^{N}}W(x)=W_0>0$ and $F:\R\rightarrow \R$ be a continuous function verifying
	\begin{itemize}
		\item[$(i)$] $\frac{F(t)}{t^{2}}\rightarrow +\infty$ as $|t|\rightarrow+\infty$.
		\item[$(ii)$] $F(t)\geq0$ for all $t\in\R$,
	\end{itemize}
 the functional $\varphi:H^{1}(\R^{N})\rightarrow\R\cup\{-\infty\}$ given by 
	$$
	\varphi(u)=\frac{1}{2}||u^{+}||^{2}-\frac{1}{2}||u^{-}||^{2}-\int W(x)F(u)dx,
	$$
satisfies $\varphi(u)<0$ on $\widehat{E}(u)\setminus B_{R}(0),\ $for all $u\in\mathcal{V}$ and for some $R>0$. 
\end{lemma}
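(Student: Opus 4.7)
My plan is a proof by contradiction. If no such $R$ exists, I can extract sequences $u_n \in \mathcal{V}$, $t_n \geq 0$ and $v_n \in E^-$ such that $w_n := t_n u_n + v_n$ satisfies $\|w_n\| \to \infty$ and $\varphi(w_n) \geq 0$. Since $u_n \in E^+$, one has $w_n^+ = t_n u_n$ and $w_n^- = v_n$, and these are orthogonal in the equivalent inner product, so $\|w_n\|^2 = \|w_n^+\|^2 + \|w_n^-\|^2$. Using $F \geq 0$ and $W \geq W_0 > 0$, the inequality $\varphi(w_n) \geq 0$ rewrites as $\|w_n^+\|^2 \geq \|w_n^-\|^2 + 2\int W F(w_n)\,dx$. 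Setting $\tilde{w}_n := w_n/\|w_n\|$, I obtain $\|\tilde{w}_n^+\|^2 \geq \|\tilde{w}_n^-\|^2$, which together with $\|\tilde{w}_n^+\|^2 + \|\tilde{w}_n^-\|^2 = 1$ forces $\|\tilde{w}_n^+\|^2 \geq 1/2$. Since $\tilde{w}_n^+ = (t_n/\|w_n\|) u_n$ and $\mathcal{V}$ is bounded (say $\|u_n\| \leq M$), this gives $t_n/\|w_n\| \geq 1/(\sqrt{2}\,M) > 0$.

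The non-degeneracy hypothesis enters next. Because $\mathcal{V}$ is bounded in the Hilbert space $H^{1}(\mathbb{R}^N)$, its weak topology is metrizable and hence the weak closure of $\mathcal{V}$ coincides with its weak-sequential closure; thus no sequence from $\mathcal{V}$ can converge weakly to $0$. After extraction, $u_n \rightharpoonup u_0 \in E^+$ (as $E^+$ is weakly closed) with $u_0 \neq 0$; along a further subsequence, $t_n/\|w_n\| \to s$ with $s \geq 1/(\sqrt{2}\,M)$ and $s < +\infty$ (the latter from $\|\tilde{w}_n^+\| \leq 1$ and $\|u_n\| \to \|u_0\| > 0$). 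Therefore $\tilde{w}_n^+ \rightharpoonup s u_0 \neq 0$, and extracting once more so that $\tilde{w}_n^- \rightharpoonup \tilde{v} \in E^-$, I conclude $\tilde{w}_n \rightharpoonup \tilde{w} := s u_0 + \tilde{v} \neq 0$ in $H^{1}(\mathbb{R}^N)$.

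To close the contradiction, by Rellich--Kondrachov and a diagonal extraction I may also assume $\tilde{w}_n(x) \to \tilde{w}(x)$ a.e.\ on $\mathbb{R}^N$. The set $\Omega := \{x : \tilde{w}(x) \neq 0\}$ has positive Lebesgue measure, and on $\Omega$ one has $|w_n(x)| = \|w_n\|\,|\tilde{w}_n(x)| \to +\infty$; hypothesis $(i)$ then gives $F(w_n(x))/w_n(x)^2 \to +\infty$ pointwise on $\Omega$. Since $F \geq 0$, Fatou's lemma yields
$$
\liminf_{n \to \infty} \int \frac{W F(w_n)}{\|w_n\|^2}\,dx \;\geq\; W_0 \int_{\Omega} \liminf_{n \to \infty} \frac{F(w_n)}{w_n^2}\,\tilde{w}_n^{2}\, dx \;=\; +\infty.
$$
However, dividing $\varphi(w_n) \geq 0$ by $\|w_n\|^2$ produces the uniform bound $\int W F(w_n)/\|w_n\|^2\,dx \leq (\|\tilde{w}_n^+\|^2 - \|\tilde{w}_n^-\|^2)/2 \leq 1/2$, the desired contradiction.

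The point I expect to be delicate is the second paragraph: producing a nonzero weak limit $\tilde{w}$, which is exactly where the non-degeneracy assumption on $\mathcal{V}$ must be invoked (via the metrizability of the weak topology on bounded subsets of $H^{1}(\mathbb{R}^N)$). The blow-up via Fatou in the last paragraph is standard and is what replaces the stronger hypothesis used in \cite[Lemma 2.5]{SW}.
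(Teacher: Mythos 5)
Your proof is correct and follows essentially the same route as the paper: argue by contradiction, normalize $w_n$, use $0\notin\overline{\mathcal{V}}^{\sigma(H^{1}(\R^{N}),H^{1}(\R^{N})')}$ to produce a nonzero weak limit of the normalized sequence, and then apply Fatou's lemma with hypothesis $(i)$ to force $\int W F(w_n)/\|w_n\|^{2}\,dx\to+\infty$, contradicting $\varphi(w_n)\geq 0$. The only slip is the claim $\|u_n\|\to\|u_0\|$ (weak convergence yields only $\liminf_n\|u_n\|\geq\|u_0\|>0$), but that lower bound is all you need to conclude $s<+\infty$, so the argument stands.
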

\begin{proof}
	Suppose by contradiction that there exist $(u_{n}) \subset \mathcal{V}$ and $(w_{n}) \subset \widehat{E}(u_{n})\setminus B_{n}(0)$ with $\varphi(w_{n})\geq0$. 
	As $||w_{n}||\rightarrow+\infty$, we set $v_{n}:=\frac{w_{n}}{||w_{n}||}\in\widehat{E}(u_{n})$. Then, there is $s_{n}\geq0$  such that
	$$
	v_{n}=s_{n}u_{n}+v_{n}^{-}.
	$$
	Consequently $w_{n}=||w_{n}||s_{n}u_{n}+||w_{n}||v_{n}^{-}$ and 
	\begin{equation}\label{desigualdadecontraditoria}
	0\leq\frac{\varphi(w_{n})}{||w_{n}||^{2}}=\frac{1}{2}s_{n}^{2}||u_{n}||^{2}-\frac{1}{2}||v_{n}^{-}||^{2}-\int\frac{W(x)F(w_{n})}{||w_{n}||^{2}}dx.
	\end{equation}
	From this, $s_{n}u_{n}\not\rightarrow0$. In fact, otherwise, $s_{n}||u_{n}||\rightarrow0$ leads to 
	$$
	0 \leq \frac{1}{2}||v_{n}^{-}||^{2}+\int\frac{W(x)F(w_{n})}{||w_{n}||^{2}}dx\leq\frac{1}{2}s_{n}^{2}||u_{n}||^{2}\rightarrow 0.
	$$
	Therefore $v_{n}^{-}\rightarrow0$ and $v_{n}=s_{n}u_{n}+v_{n}^{-}\rightarrow0$, which is absurd, because $||v_{n}||=1$ for all $n\in\N$. Thereby, $s_{n}u_{n}\not\rightarrow0$. As   $(u_{n})$ is bounded, we have $s_{n}\not\rightarrow0$. On the other hand, since $0\notin\overline{\mathcal{V}}^{\sigma(H^{1}(\R^{N}),H^{1}(\R^{N})')}$, it follows that $u_{n}\not\rightharpoonup0$, and so, $u_{n}\not\rightarrow0$. Since $s_{n}^{2}||u_{n}||^{2}\leq||v_{n}||^{2}=1$, we conclude that $s_{n}\not\rightarrow+\infty$. Thus, for some subsequence, 
	$s_{n}\rightarrow s\neq0$, $u_{n}\rightharpoonup u\neq0$ and 
	$$
	v_{n}=s_{n}u_{n}+v_{n}^{-}\rightharpoonup v=su+v^{-}\neq0.
	$$
 So, by Fatou's Lemma, 
	$$
	\int\frac{W(x)F(w_{n})}{||w_{n}||^{2}}dx\geq\int\frac{W(x)F(w_{n})}{|w_{n}|^{2}}|v_{n}|^{2}dx\geq\int_{[v\neq0]}\frac{W(x)F(w_{n})}{|{w_{n}|^{2}}}|v_{n}|^{2}dx\rightarrow+\infty,
	$$
contradicting  (\ref{desigualdadecontraditoria}).
\end{proof}

After the above commentaries we are ready to prove the main result this section.

\begin{proposition}\label{continuidadec}  The function $\lambda\mapsto d_{\lambda}$ is decreasing and continuous on $(0,+\infty)$.
\end{proposition}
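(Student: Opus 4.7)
My plan is to combine the minimax characterization (\ref{positivo}) with the fact that $F\geq 0$, which follows from $(f_4)$, to obtain monotonicity; then to run a symmetric ``competitor'' argument on the fixed slices $\widehat{E}(u)$ for continuity, the only quantitative input being a Nehari-type bound on $\int F$ across $\mathcal{N}_\mu$.

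For monotonicity, fix $0<\lambda_1<\lambda_2$. Since $F\geq 0$, for every $w\in H^{1}(\R^{N})$ one has $J_{\lambda_2}(w)=J_{\lambda_1}(w)-(\lambda_2-\lambda_1)\int F(w)\leq J_{\lambda_1}(w)$, so applying $\max_{w\in\widehat{E}(u)}$ and then $\inf_{u\in E^{+}\setminus\{0\}}$ in (\ref{positivo}) gives $d_{\lambda_2}\leq d_{\lambda_1}$. To upgrade this to strict monotonicity, take a ground state $u_{\lambda_1}$ of $(AP)_{\lambda_1}$ and let $\bar w$ denote the unique element of $\mathcal{N}_{\lambda_2}\cap\widehat{E}(u_{\lambda_1}^{+})$, which realizes $\max_{w\in\widehat{E}(u_{\lambda_1}^{+})}J_{\lambda_2}(w)$ by (\ref{maximo}). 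Since $\bar w\notin E^{-}$ one has $\bar w\not\equiv 0$, and $(f_4)$ forces $F(t)>0$ for all $t\neq 0$, whence $\int F(\bar w)>0$, and
\begin{equation*}
d_{\lambda_2}\leq J_{\lambda_2}(\bar w)=J_{\lambda_1}(\bar w)-(\lambda_2-\lambda_1)\int F(\bar w)\leq d_{\lambda_1}-(\lambda_2-\lambda_1)\int F(\bar w)<d_{\lambda_1}.
\end{equation*}

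For continuity at $\lambda>0$, let $\lambda_n\to\lambda$; I will prove $\limsup_n d_{\lambda_n}\leq d_\lambda$ and $\liminf_n d_{\lambda_n}\geq d_\lambda$ by symmetric constructions. Fix a ground state $u_\lambda$ of $(AP)_\lambda$, so that $d_\lambda=\max_{w\in\widehat{E}(u_\lambda^{+})}J_\lambda(w)$, and let $w_n\in\mathcal{N}_{\lambda_n}\cap\widehat{E}(u_\lambda^{+})$ be the unique maximizer of $J_{\lambda_n}$ on that slice. Then
\begin{equation*}
d_{\lambda_n}\leq J_{\lambda_n}(w_n)=J_\lambda(w_n)+(\lambda-\lambda_n)\int F(w_n)\leq d_\lambda+|\lambda_n-\lambda|\int F(w_n).
\end{equation*}
For the reverse bound, freeze a ground state $u_n$ of $(AP)_{\lambda_n}$ and let $\tilde w_n\in\mathcal{N}_\lambda\cap\widehat{E}(u_n^{+})$ be the unique maximizer of $J_\lambda$ on $\widehat{E}(u_n^{+})$; the symmetric chain gives $d_\lambda\leq d_{\lambda_n}+|\lambda_n-\lambda|\int F(\tilde w_n)$.

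The main obstacle is the uniform control of $\int F(w_n)$ and $\int F(\tilde w_n)$. For this I will use a Nehari-type identity: any $w\in\mathcal{N}_\mu$ satisfies $\|w^{+}\|^{2}-\|w^{-}\|^{2}=\mu\int f(w)w$, so $J_\mu(w)=\mu\int\bigl(\tfrac{1}{2}f(w)w-F(w)\bigr)\geq\mu\,\tfrac{\theta-2}{2}\int F(w)$ by $(f_4)$. Applied to $w_n\in\mathcal{N}_{\lambda_n}$ and combined with $J_{\lambda_n}(w_n)\leq d_\lambda+|\lambda_n-\lambda|\int F(w_n)$, a short absorption argument (valid once $|\lambda_n-\lambda|$ is small enough, e.g.\ $2|\lambda_n-\lambda|<\tfrac{1}{2}\lambda_n(\theta-2)$) shows that $\int F(w_n)$ is uniformly bounded in $n$; the same manipulation, together with the monotonicity already proved (which gives $d_{\lambda_n}\leq d_{\lambda/2}$ for large $n$), controls $\int F(\tilde w_n)$. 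Substituting these uniform bounds into the two competitor inequalities and letting $n\to\infty$ yields $d_{\lambda_n}\to d_\lambda$.
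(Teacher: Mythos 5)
Your proof is correct, and for the continuity part it takes a genuinely different and noticeably leaner route than the paper. The monotonicity argument is essentially the paper's: both evaluate the competitor supplied by (\ref{maximo}) on the slice $\widehat{E}(u_{\lambda_1})$ and use that $(f_4)$ forces $F>0$ off zero; you phrase it as a direct strict inequality, whereas the paper argues by contradiction from $d_\lambda=d_\mu$ (note that your step $J_{\lambda_1}(\bar w)\leq d_{\lambda_1}$ uses that the ground state is the maximizer of $J_{\lambda_1}$ on its own slice, which is exactly the singleton property behind (\ref{maximo})). For continuity, the paper's entire technical burden is proving that the competitor sequences are bounded in $H^{1}(\R^{N})$: it invokes Lemma \ref{adaptacao2.5} to trap the maximizers in a ball $B_R(0)$, and in the step $\lambda_n\searrow\lambda$ it must first establish boundedness of the ground states $u_n$ of $(AP)_{\lambda_n}$ via Lions' concentration--compactness and a translation argument in order to verify the weak-closure hypothesis of that lemma. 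You bypass all of this with the Nehari--Pankov identity $J_\mu(w)=\mu\int\left(\tfrac{1}{2}f(w)w-F(w)\right)dx$ for $w\in\mathcal{N}_\mu$, which together with $(f_4)$ gives $J_\mu(w)\geq\mu\,\tfrac{\theta-2}{2}\int F(w)\,dx$; the absorption argument then bounds $\int F$ along the competitors directly, and this is all that is required, since only the products $(\lambda_n-\lambda)\int F$ need to vanish -- no $H^{1}$-bound is ever used. Your version also treats arbitrary sequences $\lambda_n\to\lambda$ at once rather than splitting into monotone one-sided limits, and the quantifiers in the absorption step (the smallness condition on $|\lambda_n-\lambda|$ and the bound $d_{\lambda_n}\leq d_{\lambda/2}$ from the already-proved monotonicity) check out. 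The only shared inputs are (\ref{positivo}), the singleton/maximizer property (\ref{maximo}) from \cite{SW}, and the finiteness of $\int F(w)\,dx$ for $w\in H^{1}(\R^{N})$ coming from $(f_1)$--$(f_2)$, so the argument is complete as written.
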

\begin{proof} In the sequel, $u_\lambda$ and $u_\mu$ are ground state solutions of $J_\lambda$ and $J_\mu$ respectively. Note that if $\lambda>\mu$, then 
$$
J_{\mu}(u)-J_{\lambda}(u)=(\lambda-\mu)\int F(u)\,dx\geq 0, \quad \forall u\in H^{1}(\R^{N}).
$$
Hence
$$
d_{\lambda}=\inf_{u\in E^{+}\setminus\{0\}}\max_{v\in\widehat{E}(u)}J_{\lambda}(u)\leq\inf_{u\in E^{+}\setminus\{0\}}\max_{v\in\widehat{E}(u)}J_{\mu}(u)=d_{\mu},
$$
showing that $\lambda\mapsto d_{\lambda}$ is monotone non-increasing. We claim that $d_{\lambda}<d_{\mu}$. Indeed, suppose $d_\lambda = d_\mu$ and let $t_{\mu}\geq0$ and $v_{\mu}\in E^{-}$ satisfying  
$$
J_{\lambda}(t_{\mu}u_{\mu}+v_{\mu})=\displaystyle\max_{u \in \widehat{E}(u_{\mu})}J_{\lambda}(u). \quad ( \mbox{see} \,\, (\ref{maximo}) )
$$ 
Therefore,
$$
\begin{array}{ll}
d_{\lambda}& \leq J_{\lambda}(t_{\mu}u_{\mu}+v_{\mu}) =(\mu-\lambda)\int F(t_{\mu}u_{\mu}+v_{\mu})\,dx+ J_{\mu}(t_{\mu}u_{\mu}+v_{\mu}) \\
& \leq(\mu-\lambda)\int F(t_{\mu}u_{\mu}+v_{\mu})\,dx+J_{\mu}(u_{\mu})\\
&=(\mu-\lambda)\int F(t_{\mu}u_{\mu}+v_{\mu})\,dx+d_{\mu}.
\end{array}
$$
As $d_\lambda=d_\mu$, it follows that
$$
(\mu-\lambda)\int F(t_{\mu}u_{\mu}+v_{\mu})\,dx \geq 0.
$$
By using the fact that $\lambda > \mu$ and $(f_4)$, we get $t_{\mu}u_{\mu}+v_{\mu}=0$ a.e. in $\R^{N}$, and so,  $d_{\lambda}\leq J_{\lambda}(t_{\mu}u_{\mu}+v_{\mu})=0$,  contradicting  (\ref{positivo}) . From this, the function $\lambda\mapsto d_{\lambda}$ is injective and decreasing. 

Now we are going to prove the continuity of $\lambda\mapsto d_{\lambda}$. To this end, we will divide into two steps our arguments: \\
\noindent\textbf{Step 1: }Let $(\lambda_{n})$ be a sequence with $\lambda_{1}\leq\lambda_{2}\leq...\leq\lambda_{n} \to \lambda$. Our goal is to prove that $\lim_{n \to +\infty}d_{\lambda_{n}}=d_{\lambda}$. Since $\lambda\mapsto d_{\lambda}$ is decreasing then $d_{\lambda}\leq d_{\lambda_{n}},\ \forall\ n\in\N$. For each $n \in \mathbb{N}$, let us fix $t_{n} \geq 0$  and $v_{n} \in E^{-}$ verifying 
$$
J_{\lambda_{n}}(t_{n}u_{\lambda}+v_{n})=\max_{u \in \widehat{E}(u_{\lambda})}J_{\lambda_{n}}(u).
$$
From Lemma \ref{adaptacao2.5}, there exists $R>0$ such that $J_{\lambda_{1}}(u)\leq0$ for all $u\in\widehat{E}(u_{\lambda})\setminus B_{R}(0)$. Recalling that  $J_{\lambda_{n}}\leq J_{\lambda_{1}}$, we have 
\begin{equation}\label{permanecenabola}
J_{\lambda_{n}}(u)\leq0,\ \forall\ u\in\widehat{E}(u_{\lambda})\setminus B_{R}(0) \quad \mbox{and} \quad \forall\ n\in \N.
\end{equation}
On the other hand $J_{\lambda_{n}}(t_{n}u_{\lambda}+v_{n})=\displaystyle\max_{u \in \widehat{E}(u_{\lambda})}J_{\lambda_{n}}(u)\geq d_{\lambda_{n}}\geq d_{\lambda}>0$, i. e.,
\begin{equation}\label{permanecenabola2}
J_{\lambda_{n}}(t_{n}u_{\lambda}+v_{n})>0,\ \ \forall\ n\in\N.
\end{equation}
By (\ref{permanecenabola}) and (\ref{permanecenabola2}), $||t_{n}u_{\lambda}+v_{n}||\leq R$ for all $n\in\N$. Then, $(t_{n}u_{\lambda}+v_{n})$ is bounded in $H^{1}(\R^{N})$ and 
$$
\begin{array}{ll}
d_{\lambda_{n}}& \leq J_{\lambda_{n}}(t_{n}u_{\lambda}+v_{n})\\
& =(\lambda-\lambda_{n})\int F(t_{n}u_{\lambda}+v_{n})dx +J_{\lambda}(t_{n}u_{\lambda}+v_{n}) \\
& \leq(\lambda-\lambda_{n})\int F(t_{n}u_{\lambda}+v_{n})dx +J_{\lambda}(u_{\lambda})=o_{n}(1)+d_{\lambda}.
\end{array}
$$
From this, 
$$
d_{\lambda_{n}}\leq o_{n}(1)+d_{\lambda} \quad \mbox{and} \quad d_{\lambda} \leq d_{\lambda_{n}}, \quad \forall n \in \mathbb{N},
$$ 
implying that $\displaystyle\lim_{n \to +\infty}d_{\lambda_{n}}=d_{\lambda}$.

\noindent\textbf{Step 2:} Let $(\lambda_{n})$ be a sequence with  $\lambda_{1}\geq\lambda_{2}\geq...\geq\lambda_{n} \to \lambda$. Our goal is to prove $\lim_{n \to +\infty}d_{\lambda_{n}}=d_{\lambda}$. Since $\lambda\mapsto d_{\lambda}$ is decreasing then $d_{\lambda_{1}}\leq d_{\lambda_{n}}\leq d_{\lambda}$, for all $n\in\N$. From \cite{SW}, for each $n\in\N$ let $u_{n}$ be a ground state solution of $(AP)_{\lambda_{n}}$, $t_{n}\geq0$ and $v_{n}\in E^{-}$ verifying
$$
J_{\lambda}(t_{n}u_{n}+v_{n})=\max_{u \in\widehat{E}(u_{n})}J_{\lambda}(u).
$$

Our next goal is to show that $(u_{n})$ is bounded. Inspired by \cite[Proposition 2.7]{SW}, suppose by contradiction that $||u_{n}||\to +\infty$ and let $w_{n}:=\frac{u_{n}}{||u_{n}||}$. As $||u_{n}^{+}||\geq||u_{n}^{-}||$, then $||w_{n}^{+}||^{2}\geq||w_{n}^{-}||^{2}$. Using the equality $||w_{n}^{+}||^{2}+||w_{n}^{-}||^{2}=||w_{n}||^{2}=1$, we derive $||w_{n}^{+}||^{2}\geq {1}/{2},\ \forall\ n\in\N$. Consequently there exist $(y_{n}) \subset \Z^{N}$ and $r,\eta>0$ such that
\begin{equation}\label{desigualdadelions}
\int_{B_{r}(y_{n})}|w_{n}^{+}(x)|^{2}dx\geq\eta,\ \ \forall\ n\in\N.
\end{equation}
Otherwise, we can apply Lions \cite[Lemma I.1]{lionsIII} to conclude that $w_{n}^{+}\rightarrow0$ in $L^{p}(\R^{N})$ for $p \in (2,2^*)$. Then,  $\int F(sw_{n}^{+})dx \to 0$ for each $s>0$ and  
$$
\begin{array}{ll}
d_{\lambda}\geq & d_{\lambda_{n}}=J_{\lambda_{n}}(u_{n})\geq J_{\lambda_{n}}(sw_{n}^{+})=\frac{1}{2}s^{2}||w_{n}^{+}||^{2}-\lambda_{n}\int F(sw_{n}^{+})dx \\
& \geq\frac{s^{2}}{4}-\lambda_{n}\int F(sw_{n}^{+})dx \to \frac{s^{2}}{4},
\end{array}
$$
which is absurd because $s$ is arbitrary, which shows (\ref{desigualdadelions}). Now, we set 
$$
\widetilde{u}_{n}(x):=u_{n}(x+y_{n}) \quad \mbox{and} \quad \widetilde{w}_{n}(x):=w_{n}(x+y_{n}).
$$
By Lemma \ref{translacao}, $\widetilde{w}_{n}^{+}(x)=w_{n}^{+}(x+y_{n})$. Moreover, by (\ref{desigualdadelions}), $\widetilde{w}_{n}\rightharpoonup w$ with $w^{+}\neq0$,  because $\widetilde{w}_{n}^{+}\rightharpoonup w^{+}$. Since $\widetilde{u}_{n}=\widetilde{w}_{n}||u_{n}||$, it follows that $|\widetilde{u}_{n}(x)|\to +\infty$ for each $x\in\R^{N}$ with $w(x)\neq0$. Therefore, by Fatou's Lemma, 
$$
\int \frac{F(\widetilde{u}_{n})}{|\widetilde{u}_{n}|^{2}}|\widetilde{w}_{n}|^{2}dx\to+\infty.
$$
From this,
$$
\begin{array}{ll}
0 & \leq\frac{J_{\lambda_{n}}(u_{n})}{||u_{n}||^{2}}=\frac{1}{2}||w_{n}^{+}||^{2}-\frac{1}{2}||w_{n}^{-}||^{2}-\lambda_n\int\frac{F(u_{n})}{|u_{n}|^{2}}|w_{n}|^{2}dx \\
& =\frac{1}{2}||w_{n}^{+}||^{2}-\frac{1}{2}||w_{n}^{-}||^{2}-\lambda_n\int\frac{F(\widetilde{u}_{n})}{|\widetilde{u}_{n}|^{2}}|\widetilde{w}_{n}|^{2}dx \to -\infty
\end{array}
$$
obtaining a contradiction. This proves that $(u_{n})$ is bounded.

Now, we are ready to prove that $\displaystyle\lim_{n\to +\infty}d_{\lambda_{n}}=d_{\lambda}$. First of all, there exists $\eta>0$ such that
\begin{equation}\label{desigualdadelions2}
\max_{y\in\R^{N}}\int_{B_{1}(y)}|u_{n}^{+}(x)|^{2}dx\geq\eta,\ \ \forall\ n\in\N.
\end{equation}
Otherwise, Lions \cite[Lemma I.1]{lionsIII} ensures that $u_{n}^{+} \to 0$ in $L^{p}(\R^{N}),\ \forall\ p\in(2,2^{*})$, and so, $\int f(u_{n})u_{n}^{+}dx \to 0$. Now, combining this limit with the equality $0=J_{\lambda_{n}}'(u_{n})u_{n}^{+}$,   
we derive that $||u_{n}^{+}||\to 0$. However, by \cite{SW}, we know that  $||u_{n}^{+}||\geq\sqrt{2d_{\lambda_{n}}}\geq\sqrt{2d_{\lambda_{1}}}$ for all $n\in\N$, which is absurd. This proves   (\ref{desigualdadelions2}), and so, there exist $(y_{n}) \subset \Z^{N}$ and $r>0$ such that
$$
\int_{B_{r}(y_{n})}|u_{n}^{+}(x)|^{2}dx\geq\eta.
$$
Defining $\widetilde{u}_{n}(x):=u_{n}(x+y_{n})$, we have that $(\widetilde{u}_{n})$ is bounded and $\widetilde{u}_{n_j}^{+}\not\rightharpoonup0$ as $n_j\rightarrow+\infty$ for any subsequence. Fixing $\mathcal{V}:=\{\widetilde{u}_{n}^{+}\}_{n\in\N}\subset E^{+}\setminus\{0\}$, it follows that $\mathcal{V}$ is bounded and $0\notin\overline{\mathcal{V}}^{\sigma(H^{1}(\R^{N}),H^{1}(\R^{N})')}$. Thus, by Lemma \ref{adaptacao2.5}, there exists $R>0$ such that
\begin{equation}\label{adaptacaolema}
J_{\lambda}(w)<0\text{ for } w \in E(u)\setminus B_{R}(0),\ \forall\ u\in\mathcal{V}.
\end{equation}
On the other hand, if $\widetilde{v}_{n}(x):=v_{n}(x+y_{n})$, we have
\begin{equation}\label{positivonovalor}
J_{\lambda}(t_{n}\widetilde{u}_{n}+\widetilde{v}_{n})=J_{\lambda}(t_{n}u_{n}+v_{n})=\max_{u \in \widehat{E}(u_{n})}J_{\lambda}(u) \geq d_{\lambda}>0,\ \forall\ n\in\N.
\end{equation}
By (\ref{adaptacaolema}) and (\ref{positivonovalor}), it follows that $||t_{n}\widetilde{u}_{n}+\widetilde{v}_{n}||\leq R$, for all $n\in\N$. Therefore $||t_{n}u_{n}+v_{n}||\leq R$, for all $n\in\N$, that is, $(t_{n}u_{n}+v_{n})$ is bounded. Finally,
$$
\begin{array}{ll}
d_{\lambda} & \leq J_{\lambda}(t_{n}u_{n}+v_{n})\\
& =(\lambda_{n}-\lambda)\int F(t_{n}u_{n}+v_{n})dx
+J_{\lambda_{n}}(t_{n}u_{n}+v_{n}) \\
& \leq o_{n}(1)+J_{\lambda_{n}}(u_{n})=o_{n}+d_{\lambda_{n}},
\end{array}
$$
that is, 
$$
d_{\lambda}\leq o_{n}(1)+d_{\lambda_{n}},\ \forall\ n\in\N.
$$ 
Since $d_{\lambda}\geq d_{\lambda_{n}}$ for all $n\in\N$, we have $\displaystyle\lim_{n\to +\infty}d_{\lambda_{n}}=d_{\lambda}$, finishing the proof. 
\end{proof}

\section{Existence of ground state for problem $(P)_\epsilon$.}

In this section our main goal is proving that $c_\epsilon$ given in (\ref{cepsilon}) is a critical level for $I_\epsilon$ when $\epsilon$ is small enough. Hereafter, for each $\epsilon \geq 0 $, we denote by $I_\epsilon: H^{1}(\R^N) \to \mathbb{R}$ the energy functional associated with $(P)_\epsilon$ given by
$$
I_\epsilon(u)=\frac{1}{2}\int (|\nabla u|^{2}+V(x)|u|^{2}\,dx)-\int A(\epsilon x)F(u)\,dx, 
$$
or equivalently 
$$
I_\epsilon(u)=\frac{1}{2}\|u^+\|^{2}-\frac{1}{2}\|u^-\|^{2} -\int A(\epsilon x) F(u)\,dx.
$$

Here, it is very important to observe that by using the notations explored in Section 2, we have
$$
c_0=d_{A(0)}, \quad I_0=J_{A(0)} \quad \mbox{and} \quad \mathcal{M}_0=\mathcal{N}_{A(0)}. 
$$

The same idea explored in \cite[Lemma 2.4]{SW} gives
\begin{equation} \label{cepsilon2}
0<c_\epsilon=\inf_{u\in E^{+}\setminus\{0\}}\max_{v\in\widehat{E}(u)}I_{\epsilon}(u).
\end{equation}
Moreover, the Lemma \ref{adaptacao2.5} permits to argue as in \cite[Lemma 2.6]{SW} to prove that for each $u\in H^{1}(\R^{N})\setminus E^{-}$, $\mathcal{M}_\epsilon \cap\hat{E}(u)$ is a singleton set and the element of this set is the unique global maximum of $I_{\epsilon}|_{\hat{E}(u)}$, that is, there are $t_* \geq 0$ and $v_* \in E^{-}$ such that
\begin{equation} \label{maximo2}
I_{\epsilon}(t_*u+v_*)=\displaystyle\max_{w \in \widehat{E}(u)}I_{\epsilon}(w).
\end{equation}

Our first lemma shows an important relation between $c_\epsilon$ and $c_0$.  
\begin{lemma}\label{limitce}
It occurs the limit $\displaystyle\lim_{\epsilon\rightarrow0}c_{\epsilon}=c_{0}$.
\end{lemma}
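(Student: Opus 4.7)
The lemma splits into $\liminf_{\epsilon\to 0}c_\epsilon\geq c_0$ and $\limsup_{\epsilon\to 0}c_\epsilon\leq c_0$. The lower bound is nearly free: $(A_1)$ gives $A(\epsilon x)\leq A(0)$ for all $x\in\R^N$, while $(f_4)$ together with $F(0)=0$ forces $F\geq 0$, so $I_\epsilon(u)\geq I_0(u)$ for every $u\in H^1(\R^N)$. Combining (\ref{cepsilon2}) with its $\epsilon=0$ analogue obtained from (\ref{positivo}) at $\lambda=A(0)$ yields $c_\epsilon\geq c_0$ directly.

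For the upper bound, fix a ground state $u_0$ of $I_0=J_{A(0)}$ (provided by the Szulkin--Weth result quoted before Lemma \ref{translacao}): then $u_0$ is the unique maximizer of $I_0$ on $\widehat{E}(u_0^+)$ with $I_0(u_0)=c_0$. Applying (\ref{maximo2}) at $u=u_0^+$ produces the unique element $w_\epsilon=t_\epsilon u_0^+ + v_\epsilon\in\M_\epsilon\cap\widehat{E}(u_0^+)$ that maximizes $I_\epsilon$ on $\widehat{E}(u_0^+)$, so $c_\epsilon\leq I_\epsilon(w_\epsilon)$. The first technical step is a uniform $H^1$-bound on $(w_\epsilon)$: Lemma \ref{adaptacao2.5} applied to $\mathcal{V}=\{u_0^+\}$ and the constant weight $W\equiv A_0>0$ produces $R>0$ such that
$$
\tfrac{1}{2}\|u^+\|^2-\tfrac{1}{2}\|u^-\|^2-A_0\int F(u)\,dx<0 \quad\text{on } \widehat{E}(u_0^+)\setminus B_R(0).
$$
Since $A(\epsilon x)\geq A_0$ and $F\geq 0$, the left side dominates $I_\epsilon(u)$, whereas $I_\epsilon(w_\epsilon)\geq c_\epsilon\geq c_0>0$; therefore $\|w_\epsilon\|\leq R$ uniformly in $\epsilon$.

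The second step is the decomposition
$$
I_\epsilon(w_\epsilon)=I_0(w_\epsilon)+\int\bigl[A(0)-A(\epsilon x)\bigr]F(w_\epsilon)\,dx,
$$
in which $I_0(w_\epsilon)\leq c_0$ because $u_0$ maximizes $I_0$ on $\widehat{E}(u_0^+)$. It remains to show the second integral tends to $0$. My plan is to pass to a subsequence along which $w_\epsilon\rightharpoonup w_\ast=t_\ast u_0^+ + v_\ast$ weakly in $H^1$, let $\epsilon\to 0$ in the Euler--Lagrange identities $I_\epsilon'(w_\epsilon)u_0^+=0$ and $I_\epsilon'(w_\epsilon)v=0$ for $v\in E^-$ (using that the fixed direction $u_0^+$ lies in every subcritical $L^p$, so that the pairings involving $u_0^+$ are uniformly tight and equi-integrable) to identify $w_\ast$ as a critical point of $I_0$ on $\widehat{E}(u_0^+)$, hence $w_\ast=u_0$ by uniqueness, and then to upgrade this to strong convergence $w_\epsilon\to u_0$ in $H^1(\R^N)$ via an $(f_4)$-type norm computation. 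Once strong convergence holds, $(f_1)$--$(f_2)$ give $F(w_\epsilon)\to F(u_0)$ in $L^1(\R^N)$, and dominated convergence kills the error integral via $0\leq A(0)-A(\epsilon x)\leq 2\|A\|_\infty$ and the pointwise limit $A(\epsilon x)\to A(0)$.

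The main obstacle is precisely this tightness/strong-convergence step for $w_\epsilon$: the component $w_\epsilon^+=t_\epsilon u_0^+$ is automatically tight, but $v_\epsilon\in E^-$ is only weakly controlled and could, in principle, leak mass to infinity. If the strong-convergence route is stubborn, a workable backup is to split the error integral at $|x|=r/\epsilon$, where $r>0$ satisfies $|A(y)-A(0)|<\delta$ for $|y|<r$ by continuity of $A$ at $0$: the inner piece is $\leq\delta\int F(w_\epsilon)\,dx\leq C\delta$, while the outer piece is suppressed by a Lions-type vanishing/concentration argument on $v_\epsilon$, analogous to the one carried out in the proof of Proposition \ref{continuidadec}; letting $\delta\to 0$ then finishes the argument.
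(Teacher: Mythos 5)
Your lower bound is exactly the paper's, and your choice of test elements $w_\epsilon=t_\epsilon u_0^++v_\epsilon\in\M_\epsilon\cap\widehat{E}(u_0^+)$ together with the boundedness argument via Lemma \ref{adaptacao2.5} also matches the paper. The gap is where you already suspect it: the decomposition $I_\epsilon(w_\epsilon)=I_0(w_\epsilon)+\int[A(0)-A(\epsilon x)]F(w_\epsilon)\,dx$ forces you to show that the nonnegative error integral vanishes, and both of your routes to that (strong $H^1$-convergence $w_\epsilon\to u_0$, or uniform tightness of $F(w_\epsilon)$ so the region $|x|\ge r/\epsilon$ carries no mass) require ruling out that the $E^-$-component $v_\epsilon$ leaks to infinity. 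Neither route is carried out, and the first one is actually backwards relative to the paper: the strong convergence $t_n\to 1$, $v_n\to w_0^-$ is proved there as a \emph{consequence} of Lemma \ref{limitce} (see the lemma immediately following it, which extracts $\liminf\|v_n\|^2=\|v\|^2$ from the equality forced in the chain of inequalities). Using it as an input risks circularity, or at least demands an independent concentration--compactness argument that your sketch does not supply.

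The idea you are missing is that no tightness is needed if you refuse to decompose. Write out
$$
c_{\epsilon_n}\le I_{\epsilon_n}(t_nu_0^++v_n)=\frac{1}{2}t_n^2\|u_0^+\|^2-\frac{1}{2}\|v_n\|^2-\int A(\epsilon_n x)F(t_nu_0^++v_n)\,dx
$$
and bound the $\limsup$ term by term: $t_n\to t_0$ handles the first term; the second and third terms enter with a \emph{minus} sign, so weak lower semicontinuity of the norm gives $\limsup(-\frac{1}{2}\|v_n\|^2)\le-\frac{1}{2}\|v\|^2$, and Fatou's lemma (valid since $F\ge0$ by $(f_4)$, $A(\epsilon_n x)\to A(0)$ pointwise, and $t_nu_0^++v_n\to t_0u_0^++v$ a.e.\ along a subsequence) gives $\limsup\bigl(-\int A(\epsilon_n x)F(t_nu_0^++v_n)\,dx\bigr)\le-\int A(0)F(t_0u_0^++v)\,dx$. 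Any mass escaping to infinity only \emph{lowers} these upper bounds, so it causes no harm. This yields $\limsup_n c_{\epsilon_n}\le I_0(t_0u_0^++v)\le\max_{w\in\widehat{E}(u_0^+)}I_0(w)=I_0(u_0)=c_0$, which is the whole point. Your framework is salvageable, but you must replace the error-integral step with this one-sided Fatou/weak-lsc estimate (or else actually prove the concentration statement you defer to the backup plan).
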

\begin{proof}
 Consider $\epsilon_{n}\rightarrow0$ with $\epsilon_{n}>0$. Our goal is to prove that $c_{\epsilon_{n}}\rightarrow c_{0}$. First of all, note that $c_{0}\leq c_{\epsilon_{n}}$ for all $n\in\N$, which leads to  $c_{0}\leq\liminf_{n \to +\infty}c_{\epsilon_{n}}$. On the other hand, by (\ref{maximo2}), if $w_{0}\in H^{1}(\R^{N})$ is a ground state solution of  $(P)_{0}$, there are $t_{n}\in[0,+\infty)$ and $v_{n}\in E^{-}$ such that $t_{n}w_{0}^{+}+v_{n}\in\M_{\epsilon_{n}}$, and so,  
 $$
 I_{\epsilon_{n}}(t_{n}w_{0}^{+}+v_{n})\geq c_{\epsilon_{n}}>0,\ \ \forall\ n \in \mathbb{N}.
 $$
As in the previous section, $(t_{n}w_{0}^{+}+v_{n})$ is bounded. Thus, without loss of generality, we can assume  that $t_{n}\rightarrow t_{0}$ and $v_{n}\rightharpoonup v$ in $H^{1}(\R^N)$. Note that
$$
c_{\epsilon_{n}}\leq I_{\epsilon_{n}}(t_{n}w_{0}^{+}+v_{n})=\frac{1}{2}t_{n}^{2}||w_{0}^{+}||^{2}-\frac{1}{2}||v_{n}||^{2}-\int A(\epsilon_{n} x)F(t_{n}w_{0}^{+}+v_{n})dx.
$$
Hence, since the norm is weakly lower semicontinous, the Fatou's Lemma gives   
$$
\begin{array}{ll}
\limsup_{n \to +\infty}c_{\epsilon_{n}} & \leq \limsup_{n\to +\infty}\left(\frac{1}{2}t_{n}^{2}||w_{0}^{+}||^{2}-\frac{1}{2}||v_{n}||^{2}\right)+ \\ 
& \limsup_{n \to +\infty}\left(-\int A(\epsilon_{n} x)F(t_{n}w_{0}^{+}+v_{n})dx\right) \\
& \leq  \frac{1}{2}t_{0}^{2}||w_{0}^{+}||^{2}-\frac{1}{2}||v||^{2}-\int A(0)F(t_{0}w_{0}^{+}+v)dx \\
& =I_{0}(t_{0}w_{0}^{+}+v)\leq I_{0}(w_{0})=c_{0}.
\end{array}
$$
Therefore, $\lim_{n \to +\infty}c_{\epsilon_{n}}=c_{0}$.
\end{proof}

As an immediate consequence of the last lemma we have the corollary below

\begin{corollary}\label{epsilonpequeno}
	There exists $\epsilon_{0}>0$ such that $c_{\epsilon}<d_{A_\infty}$ for all $\epsilon\in(0,\epsilon_{0})$, where $A_\infty=\lim_{|x| \to +\infty}A(x)$.
\end{corollary}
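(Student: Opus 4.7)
The plan is to combine the continuity statement of Lemma \ref{limitce} with the strict monotonicity of $\lambda \mapsto d_\lambda$ established in Proposition \ref{continuidadec}. The corollary follows from a short chain of inequalities, so the proof is essentially a one-line observation.

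First I would recall that by $(A_1)$ we have
\[
A_\infty = \lim_{|x| \to +\infty} A(x) < \sup_{x \in \mathbb{R}^N} A(x) = A(0),
\]
so $A_\infty < A(0)$ strictly. Proposition \ref{continuidadec} then gives $d_{A(0)} < d_{A_\infty}$, because $\lambda \mapsto d_\lambda$ is \emph{strictly} decreasing (this strictness is the key content proved in the first half of that proposition; monotone non-increasing would not suffice here).

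Next I would invoke the identification $c_0 = d_{A(0)}$ noted at the start of Section 3, together with Lemma \ref{limitce}, which says $c_\epsilon \to c_0$ as $\epsilon \to 0^+$. Combining these,
\[
\lim_{\epsilon \to 0^+} c_\epsilon = c_0 = d_{A(0)} < d_{A_\infty}.
\]
By the definition of limit applied to the positive quantity $d_{A_\infty} - d_{A(0)} > 0$, there exists $\epsilon_0 > 0$ such that $c_\epsilon < d_{A_\infty}$ for every $\epsilon \in (0, \epsilon_0)$.

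There is no real obstacle here; the only thing worth being careful about is to cite Proposition \ref{continuidadec} for the \emph{strict} inequality $d_{A(0)} < d_{A_\infty}$ rather than just a weak one, since the whole purpose of the corollary is to get a strict gap between $c_\epsilon$ and the ``problem at infinity'' level $d_{A_\infty}$, which will be used later to rule out loss of compactness in the existence proof.
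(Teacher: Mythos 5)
Your proof is correct and follows exactly the same route as the paper: use $(A_1)$ to get $A_\infty < A(0)$, invoke the strict monotonicity of $\lambda \mapsto d_\lambda$ from Proposition \ref{continuidadec} to obtain $c_0 = d_{A(0)} < d_{A_\infty}$, and then apply Lemma \ref{limitce}. Your remark that the \emph{strict} decrease (not mere non-increase) is the essential ingredient is exactly the point of the paper's argument as well.
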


\begin{proof}
By condition $(A_1)$, $A(0) > A_\infty$, then the Proposition \ref{continuidadec} ensures that $d_{A(0)}<d_{A_\infty}$, or equivalently, $c_{0}<d_{A_\infty}$. Now it is enough to apply  the Lemma \ref{limitce} to get the desired result. 
\end{proof}

\vspace{0.5 cm}

As a byproduct of the proof of  Lemma \ref{limitce}, we also have the following result, which can be useful for related problems.
\begin{lemma} Let $(t_n) \subset [0,+\infty)$ and $(v_n) \subset E^{-}$ be the sequences defined in the proof of Lemma \ref{limitce}. Then, for some subsequence, 
$$
t_n \to 1 \quad \mbox{and} \quad v_n \to w_0^-.   
$$
Hence, $t_n w_0^++v_n \to w_0$ in $H^{1}(\R^N)$. 
\end{lemma}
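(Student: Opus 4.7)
The plan is to exploit the fact that the estimates used to obtain $\limsup_{n\to\infty} c_{\epsilon_n} \leq c_0$ in the proof of Lemma \ref{limitce} are in fact tight, so that all the inequalities coming from weak lower semicontinuity and Fatou's Lemma must collapse to equalities, pinning down the weak limit and eventually forcing strong convergence.

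First I would recall the setup: $w_0$ is a ground state of $(P)_0$, hence lies in $\mathcal{M}_0 = \mathcal{N}_{A(0)}$ and realizes the maximum of $I_0 = J_{A(0)}$ on $\widehat{E}(w_0^+)$; the sequence $(t_n w_0^+ + v_n)$ has already been shown bounded in the proof of Lemma \ref{limitce}. Thus, up to a subsequence, $t_n \to t_0 \in [0,+\infty)$ and $v_n \rightharpoonup v$ in $H^{1}(\R^N)$, with $v \in E^-$ by closedness of $E^-$ under weak convergence. Extracting a further subsequence (using Rellich--Kondrachov on balls and a diagonal argument), one may also assume $t_n w_0^+ + v_n \to t_0 w_0^+ + v$ a.e. in $\R^N$. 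Note also that $(f_4)$ implies $F \geq 0$ on $\R$.

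Next I would plug into $c_{\epsilon_n} \leq I_{\epsilon_n}(t_n w_0^+ + v_n)$ and pass to $\limsup$. Weak lower semicontinuity applied to $\|v_n\|^2$ and Fatou's Lemma applied to $\int A(\epsilon_n x) F(t_n w_0^+ + v_n)\,dx$ (using that $A(\epsilon_n x) \to A(0)$ and $F \geq 0$) yield
$$
c_0 = \lim_{n\to\infty} c_{\epsilon_n} \leq I_0(t_0 w_0^+ + v) \leq \max_{w \in \widehat{E}(w_0^+)} I_0(w) = I_0(w_0) = c_0,
$$
so every inequality above is an equality. The uniqueness of the maximizer of $I_0$ on $\widehat{E}(w_0^+)$ (property (\ref{maximo}) applied with $\lambda = A(0)$) then forces $t_0 w_0^+ + v = w_0$, and hence $t_0 = 1$ and $v = w_0^-$.

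It remains to upgrade $v_n \rightharpoonup w_0^-$ to strong convergence. The same equality chain forces $\liminf_{n} \|v_n\|^2 = \|w_0^-\|^2$; passing to a further subsequence along which $\|v_n\| \to \|w_0^-\|$ and using the Hilbert-space identity $\|v_n - w_0^-\|^2 = \|v_n\|^2 - 2\langle v_n, w_0^-\rangle + \|w_0^-\|^2 \to 0$, I conclude $v_n \to w_0^-$ in $H^{1}(\R^N)$. Combined with $t_n \to 1$, this gives $t_n w_0^+ + v_n \to w_0$ in $H^{1}(\R^N)$. The main obstacle is precisely this upgrade from weak to strong convergence: one must carefully extract the subsequence on which $\|v_n\|$ actually converges (not just has $\|w_0^-\|$ as liminf), and then invoke the standard fact that in a Hilbert space weak convergence together with norm convergence implies strong convergence.
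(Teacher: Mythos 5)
Your proposal is correct and follows essentially the same route as the paper: both arguments revisit the inequality chain from Lemma \ref{limitce}, observe that $\lim c_{\epsilon_n}=c_0$ forces every inequality to be an equality (yielding $\liminf\|v_n\|^2=\|v\|^2$ and $I_0(t_0w_0^++v)=I_0(w_0)=c_0$), identify $t_0w_0^++v=w_0$ via the uniqueness of the maximizer of $I_0$ on $\widehat{E}(w_0^+)$, and upgrade $v_n\rightharpoonup v$ to strong convergence from the norm convergence along a subsequence. The only difference is the (immaterial) order in which the identification $v=w_0^-$ and the strong convergence are established.
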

\begin{proof}
Note that in the proof of Lemma \ref{limitce},  we find that  
$$
\liminf_{n\to +\infty} ||v_{n}||^{2}=||v||^{2}.
$$
Then for some subsequence $\lim_{n \to +\infty}||v_{n}||=||v||$, and so, $v_{n}\rightarrow v$. Furthermore, from the previous lemma  $I_{0}(w_{0})=I_{0}(t_{0}w_{0}^{+}+v)$, where $w_{0}\in\M_{0}$. Hence $t_{0}w_{0}^{+}+v=w_{0}$, from where it follows that $t_{0}=1$ and $v=w_{0}^{-}$. Thereby, $t_{n}\rightarrow 1$ and $v_{n}\rightarrow w_{0}^{-}$.
\end{proof}

\vspace{0.5 cm}

Our next result is related to the \cite[Proposition 2.7]{SW}, however as in the present paper $A$ is not periodic, we cannot repeat the same arguments explored in that paper, then some adjustments are necessary in the proof to get the same result.  
\begin{proposition}\label{limitacao}
$I_{\epsilon}$ is coercive on $\M_{\epsilon}$.
\end{proposition}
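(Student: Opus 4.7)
The plan is to argue by contradiction: assume there is a sequence $(u_n)\subset\mathcal{M}_\epsilon$ with $\|u_n\|\to+\infty$ and $I_\epsilon(u_n)\leq C$. The first observation to exploit is that, since $u_n\in\mathcal{M}_\epsilon\cap\widehat{E}(u_n^+)$ and this intersection is a singleton realizing the maximum of $I_\epsilon$ on $\widehat{E}(u_n^+)$ by (\ref{maximo2}), taking the element $0\in\widehat{E}(u_n^+)$ as a test point gives $I_\epsilon(u_n)\geq I_\epsilon(0)=0$. Combining this with $(f_4)$ (which forces $F\geq 0$) and the identity $\|u_n^+\|^2-\|u_n^-\|^2=2I_\epsilon(u_n)+2\int A(\epsilon x)F(u_n)\,dx$, we deduce $\|u_n^+\|\geq \|u_n^-\|$, hence $\|w_n^+\|^2\geq 1/2$ where $w_n:=u_n/\|u_n\|$. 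In particular $\|u_n^+\|\to+\infty$ as well.

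Next I would apply the Lions-type alternative to $w_n^+$ and split in two subcases. If $w_n^+$ vanishes, i.e.\ $\sup_{y\in\R^N}\int_{B_1(y)}|w_n^+|^2\,dx\to 0$, then by \cite[Lemma I.1]{lionsIII} one has $w_n^+\to 0$ in $L^p(\R^N)$ for every $p\in(2,2^*)$. For any fixed $s>0$ the element $sw_n^+=(s/\|u_n\|)u_n^+$ lies in $\widehat{E}(u_n^+)$, so $I_\epsilon(u_n)\geq I_\epsilon(sw_n^+)=\tfrac{s^2}{2}\|w_n^+\|^2-\int A(\epsilon x)F(sw_n^+)\,dx$. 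Using the growth estimate $|F(t)|\leq\varepsilon t^2+C_\varepsilon|t|^{q+1}$ produced by $(f_1)$--$(f_2)$ together with $\|w_n^+\|_2\leq C$ and $\|w_n^+\|_{q+1}\to 0$, we obtain $I_\epsilon(u_n)\geq s^2/8-o_n(1)$ for each fixed $s$, which contradicts $I_\epsilon(u_n)\leq C$ by taking $s$ sufficiently large.

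Suppose instead non-vanishing: there exist $r,\eta>0$ and $(y_n)\subset\R^N$ with $\int_{B_r(y_n)}|w_n^+|^2\,dx\geq\eta$. Let $\widetilde{u}_n(x):=u_n(x+y_n)$ and $\widetilde{w}_n(x):=w_n(x+y_n)$; by Lemma \ref{translacao} one has $\widetilde{w}_n^+(x)=w_n^+(x+y_n)$, the sequence $(\widetilde{w}_n)$ is bounded in $H^1(\R^N)$, and up to a subsequence $\widetilde{w}_n\rightharpoonup \widetilde{w}$ with $\widetilde{w}^+\neq 0$, hence the measurable set $\Omega:=\{\widetilde{w}\neq 0\}$ has positive measure. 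On $\Omega$ we have $|\widetilde{u}_n(x)|=\|u_n\|\,|\widetilde{w}_n(x)|\to+\infty$ a.e., and $(f_4)$ gives $F(t)/t^2\to+\infty$ as $|t|\to+\infty$. Using $A(\epsilon x)\geq A_0>0$ from $(A_1)$ and the translation invariance of Lebesgue measure,
\begin{equation*}
\frac{\int A(\epsilon x)F(u_n)\,dx}{\|u_n\|^2}\geq A_0\int\frac{F(\widetilde{u}_n)}{|\widetilde{u}_n|^2}|\widetilde{w}_n|^2\,dx\longrightarrow+\infty
\end{equation*}
by Fatou's lemma applied on $\Omega$. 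Dividing $I_\epsilon(u_n)$ by $\|u_n\|^2$ then yields $I_\epsilon(u_n)/\|u_n\|^2\to-\infty$, which contradicts $0\leq I_\epsilon(u_n)\leq C$.

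The main obstacle, and the reason the proof differs from \cite[Proposition 2.7]{SW}, is that $A$ is not $\mathbb{Z}^N$-periodic, so the translated energies $I_\epsilon(\widetilde{u}_n)$ need not remain comparable to the original ones and the standard periodic compactness argument fails. The way around it is to avoid translating the functional altogether: we translate only the sequence $w_n$ (and $u_n$) to extract a nontrivial weak limit, and we exploit the uniform lower bound $A\geq A_0>0$ in $(A_1)$ so that in the Fatou step only the translation of $F(u_n)$ is needed, not of the weight $A$.
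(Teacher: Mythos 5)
Your argument is correct and follows essentially the same route as the paper's proof: argue by contradiction, normalize $w_n=u_n/\|u_n\|$ and use $\|w_n^+\|^2\geq 1/2$, then run the Lions vanishing/non-vanishing dichotomy, excluding vanishing by testing with $sw_n^+$ and handling non-vanishing by translating the sequence and applying Fatou, with the non-periodicity of $A$ absorbed exactly as in the paper via the uniform lower bound $A\geq A_0>0$. The only cosmetic point is that the concentration points $y_n$ should be taken in $\Z^{N}$ (as the Lions lemma allows after enlarging $r$) so that Lemma \ref{translacao} applies and $\widetilde{w}_n^{+}$ is indeed the translate of $w_n^{+}$.
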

\begin{proof}
Suppose that there exists $(u_{n}) \subset \M_{\epsilon}$ verifying 
$$
I_{\epsilon}(u_{n})\leq d \quad \mbox{and} \quad ||u_{n}||\rightarrow+\infty,
$$ 
for some $d \in \R$. Setting $v_{n}:=\frac{u_{n}}{||u_{n}||}$, it follows that $||v_{n}^{+}||\geq||v_{n}^{-}||$ and $||v_{n}^{+}||^{2}\geq\frac{1}{2}$. On the other hand, there exist $(y_{n}) \subset \Z^{N}$ and $r,\eta>0$ such that,
\begin{equation}\label{vnlions}
\int_{B_{r}(y_{n})}|v_{n}^{+}|^{2}dx>\eta, \quad \forall n\in\N.
\end{equation}
In fact, suppose by contradiction that (\ref{vnlions}) does not hold. Then, applying again Lions \cite[Lemma I.1]{lionsIII}, $v_{n}^{+}\rightarrow0$ in $L^{p}(\R^{N})$ for all $p\in(2,2^{*})$. Hence,  by $(f_1)-(f_2)$, $\int F(sv_{n}^{+})dx\rightarrow0$ for all $s>0$. Thereby, 
$$
\begin{array}{ll}
d & \geq  I_{\epsilon}(u_{n})\geq I_{\epsilon}(sv_{n}^{+})=\frac{1}{2}s^{2}||v_{n}^{+}||^{2}-\int A(\epsilon x)F(sv_{n}^{+})dx\geq \\
& \geq\frac{s^{2}}{4}-\int A(0)F(sv_{n}^{+})dx\rightarrow\frac{s^{2}}{4},
\end{array}
$$
which absurd, because $s$ is arbitrary. This shows that (\ref{vnlions}) is valid.

Fixing  $\widetilde{u}_{n}(x):=u_{n}(x+y_{n})$ and $\widetilde{v}_{n}(x):=v_{n}(x+y_{n})$,  by Lemma \ref{translacao}, we have $\widetilde{v}^{+}_{n}(x):=v^{+}_{n}(x+y_{n})$ and $\widetilde{u}_{n}=\widetilde{v}_{n}||u_{n}||$. Since $\widetilde{v}_{n}\rightharpoonup v$, by (\ref{vnlions}), $v\neq0$. Then, $\widetilde{u}_{n}(x)\rightarrow+\infty$ when $v(x)\neq0$. By using  the Fatou's Lemma, we get
$$
\int\frac{F(u_{n})}{||u_{n}||^{2}}\,dx\geq\int\frac{F(u_{n})}{|u_{n}|^{2}}|v_{n}|^{2}dx=\int\frac{F(\widetilde{u}_{n})}{|\widetilde{u}_{n}|^{2}}|\widetilde{v}_{n}|^{2}dx\geq\int_{[v\neq0]}\frac{F(\widetilde{u}_{n})}{|\widetilde{u}_{n}|^{2}}|\widetilde{v}_{n}|^{2}dx\rightarrow+\infty.
$$
The above limit yields 
$$
\begin{array}{ll}
0 & \leq\frac{I_{\epsilon}(u_{n})}{||u_{n}||^{2}}=\frac{1}{2}||v_{n}^{+}||^{2}-\frac{1}{2}||v_{n}^{-}||^{2}-\int A(\epsilon x)\frac{F(u_{n})}{||u_{n}||^{2}}dx \\ 
& \leq \frac{1}{2}-A_0\int\frac{F(u_{n})}{||u_{n}||^{2}}dx\rightarrow-\infty,
\end{array}
$$
obtaining a new absurd.
\end{proof}

\vspace{0.5 cm}

Now, we can repeat the same arguments found in \cite[see proof of Theorem 1.1]{SW} to guarantee the existence of a $(PS)$ sequence $(u_{n}) \subset \mathcal{M}_\epsilon$ associated with $c_{\epsilon}$, that is, 
$$
I_\epsilon(u_n) \to c_\epsilon \quad \mbox{and} \quad I'_\epsilon(u_n) \to 0.
$$

\begin{theorem}
The problem $(P)_{\epsilon}$ has a ground state solution for all $\epsilon \in (0, \epsilon_0)$, where $\epsilon_0 >0$ was given in Corollary \ref{epsilonpequeno}.
\end{theorem}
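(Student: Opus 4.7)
Fix a Palais--Smale sequence $(u_n)\subset\M_\epsilon$ at level $c_\epsilon$, as provided by the remark preceding the statement. By Proposition~\ref{limitacao} this sequence is bounded in $H^{1}(\R^{N})$, so $u_n\rightharpoonup u_\epsilon$ along a subsequence; testing $I'_\epsilon(u_n)\to 0$ against $\varphi\in C_c^\infty(\R^N)$ and using the subcritical growth $(f_1)$--$(f_2)$ yields $I'_\epsilon(u_\epsilon)=0$. If $u_\epsilon\neq 0$, then $u_\epsilon\in\M_\epsilon$, and the algebraic identity
\[
I_\epsilon(u)-\tfrac12 I'_\epsilon(u)u=\int A(\epsilon x)\Bigl(\tfrac12 f(u)u-F(u)\Bigr)dx,
\]
the positivity of the integrand guaranteed by $(f_4)$, Fatou's Lemma and the definition of $c_\epsilon$ give $c_\epsilon\le I_\epsilon(u_\epsilon)\le\liminf_n I_\epsilon(u_n)=c_\epsilon$, so $u_\epsilon$ is a ground state.

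The decisive task is to exclude $u_\epsilon=0$. Assume $u_n\rightharpoonup 0$. I would apply Lions' concentration--compactness alternative to $(u_n^+)$. In the vanishing case $\sup_{y\in\R^N}\int_{B_1(y)}|u_n^+|^2\,dx\to 0$ one has $u_n^+\to 0$ in $L^p(\R^N)$ for every $p\in(2,2^*)$; combined with $(f_1)$--$(f_2)$ and the identity $\|u_n^+\|^2=I'_\epsilon(u_n)u_n^++\int A(\epsilon x)f(u_n)u_n^+\,dx$ this forces $\|u_n^+\|\to 0$, and since $F\ge 0$ we get $I_\epsilon(u_n)\le\tfrac12\|u_n^+\|^2\to 0$, contradicting $c_\epsilon>0$ from (\ref{cepsilon2}). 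Hence there exist $(y_n)\subset\Z^N$ and $r,\eta>0$ with $\int_{B_r(y_n)}|u_n^+|^2\,dx\ge\eta$ for all $n$. Setting $\widetilde u_n(x):=u_n(x+y_n)$, Lemma~\ref{translacao} together with Rellich gives $\widetilde u_n\rightharpoonup\widetilde u$ with $\widetilde u^+\neq 0$, and the assumption $u_n\rightharpoonup 0$ forces $|y_n|\to\infty$ (otherwise a bounded integer sequence $y_n$ has a constant subsequence, whence $u_n\rightharpoonup\widetilde u(\cdot-y_0)\neq 0$).

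The $\Z^N$-periodicity of $V$ is crucial at this point. For every $\varphi\in C_c^\infty(\R^N)$, the change of variables $x\mapsto x-y_n$ gives
\[
0=\lim_n I'_\epsilon(u_n)\varphi(\cdot-y_n)=\int\bigl(\nabla\widetilde u\cdot\nabla\varphi+V(x)\widetilde u\varphi\bigr)dx-A_\infty\int f(\widetilde u)\varphi\,dx,
\]
because $V(x+y_n)=V(x)$ and, by $(A_1)$ and $|y_n|\to\infty$, $A(\epsilon(x+y_n))\to A_\infty$ uniformly on the compact support of $\varphi$. Therefore $\widetilde u$ is a nontrivial weak solution of $(AP)_{A_\infty}$, so $\widetilde u\in\mathcal{N}_{A_\infty}$ and $J_{A_\infty}(\widetilde u)\ge d_{A_\infty}$. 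Applying the same change of variables to the representation
$c_\epsilon=\lim_n\int A(\epsilon x)\bigl(\tfrac12 f(u_n)u_n-F(u_n)\bigr)dx$
and invoking Fatou on the non-negative integrand yields
\[
c_\epsilon\ge\int A_\infty\Bigl(\tfrac12 f(\widetilde u)\widetilde u-F(\widetilde u)\Bigr)dx=J_{A_\infty}(\widetilde u)\ge d_{A_\infty},
\]
contradicting Corollary~\ref{epsilonpequeno}. Thus $u_\epsilon\neq 0$ and the theorem follows.

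I expect the main obstacle to be this last Fatou step, namely transferring the level $c_\epsilon$ to a lower bound on the autonomous energy $J_{A_\infty}(\widetilde u)$: it depends simultaneously on the non-negativity $\tfrac12 f(t)t-F(t)\ge 0$ from $(f_4)$ (which legitimises Fatou), on the pointwise limit $A(\epsilon(x+y_n))\to A_\infty$ from $(A_1)$ and $|y_n|\to\infty$, and on the $\Z^N$-periodicity of $V$ to keep the quadratic part of $I_\epsilon$ invariant under translation by $y_n$. The strict separation $c_\epsilon<d_{A_\infty}$ from Corollary~\ref{epsilonpequeno} is exactly what closes the contradiction.
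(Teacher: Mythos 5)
Your proposal is correct and follows essentially the same route as the paper: Lions' lemma applied to $(u_n^+)$, translation by $(y_n)$, the limit $A(\epsilon(x+y_n))\to A_\infty$ together with the $\Z^N$-periodicity of $V$, Fatou's lemma on the non-negative integrand $\tfrac12 f(t)t-F(t)$, and the strict inequality $c_\epsilon<d_{A_\infty}$ from Corollary \ref{epsilonpequeno} to rule out the translations escaping to infinity. The only cosmetic differences are that the paper phrases the dichotomy as ``$(z_n)$ is bounded'' rather than assuming $u_n\rightharpoonup 0$ for contradiction, and that it delegates your final step (passing from $u_\epsilon\neq0$, $I'_\epsilon(u_\epsilon)=0$ to $I_\epsilon(u_\epsilon)=c_\epsilon$) to a citation, which you instead write out explicitly.
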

\begin{proof}
First of all, the fact that $(u_{n}) \subset \mathcal{M}_\epsilon$ leads to 
$$
0=I_{\epsilon}'(u_{n})u_{n}^{+}=||u_{n}^{+}||^{2}-\int A(\epsilon x)f(u_{n})u_{n}^{+}dx\geq 2c_{\epsilon}-\int A(\epsilon x)f(u_{n})u_{n}^{+}dx.
$$
Therefore $\int A(\epsilon x)f(u_{n})u_{n}^{+}dx\not\rightarrow0$. Since $(u_{n})$ is bounded, by Lions  \cite[Lemma I.1]{lionsIII}, there exist $\eta,\delta>0$ and $(z_{n}) \subset \Z^{N}$ such that
$$
\int_{B_{\delta}(z_{n})}|u_{n}^{+}|^{2}dx>\eta, \quad \forall n \in \mathbb{N}.
$$
\begin{claim} 
	$(z_n)$ is a bounded sequence.
\end{claim}
If $(z_{n})$ is unbounded, for some subsequence, we must have $|z_{n}|\rightarrow+\infty$. Fixing  $w_{n}(x):=u_{n}(x+z_{n})$, we derive $w_{n}\rightharpoonup w\neq0$. Now, for each  $\phi\in C_{0}^{\infty}(\R^{N})$,  
$$
\begin{array}{ll}
o_{n}(1)& =I_{\epsilon}'(u_{n})\phi(\cdot-z_{n})=B(u_{n},\phi(\cdot-z_{n}))-\int A(\epsilon x)f(u_{n})\phi(\cdot-z_{n})dx \\
& =B(w_{n},\phi)-\int A(\epsilon x+\epsilon z_{n})f(w_{n})\phi dx,
\end{array}
$$
where
$$
B(u,v)=\int (\nabla u \nabla v+V(x)uv)\,dx, \quad \forall u,v \in H^{1}(\R^N).
$$
Taking the limit  $n\rightarrow+\infty$,  we obtain
$$
0=B(w,\phi)-\int A_{\infty}f(w)\phi dx=J_{A_\infty}'(w)\phi,\ \forall\ \phi\in C^{\infty}_{0}(\R^{N}).
$$
Now, the density of $C^{\infty}_{0}(\R^{N})$ in $H^{1}(\R^N)$ gives
$$
0=B(w,v)-\int A_{\infty}f(w)v dx=J_{A_\infty}'(w)v,\ \forall\ v\in H^{1}(\R^N).
$$
The last equality says that $w$ is a nontrivial solution of $(AP)_{A_\infty}$. The characterization of $d_{A_\infty}$ together with  Fatou's Lemma yields  
$$
\begin{array}{ll}
d_{A_\infty}& \leq J_{A_\infty}(w)=J_{A_\infty}(w)-\frac{1}{2}J_{A_\infty}'(w)w=\int A_{\infty}\left(\frac{1}{2}f(w)w-F(w)\right)dx \\
& \leq \liminf_{n\to +\infty}\int A(\epsilon x+\epsilon z_{n})\left(\frac{1}{2}f(w_{n})w_{n}-F(w_{n})\right)dx \\
& =\liminf_{n \to +\infty}\int A(\epsilon x)\left(\frac{1}{2}f(u_{n})u_{n}-F(u_{n})\right)dx \\ 
& =\liminf_{n\to +\infty}\left(I_{\epsilon}(u_{n})-\frac{1}{2}I_{\epsilon}'(u_{n})u_{n}\right)=c_{\epsilon},
\end{array}
$$
that is 
$$
d_{A_\infty}\leq c_{\epsilon}, \quad \forall \epsilon >0.  
$$ 
On the other hand, by Corollary \ref{epsilonpequeno}, $c_{\epsilon}<d_{A_\infty}$ when $\epsilon<\epsilon_{0}$,  which is absurd. Therefore, $(z_{n})$ is bounded.

As  $(z_{n})$ is bounded, there exists $r>0$ such that $B_{\delta}(z_{n})\subset B_{r}(0)$ for all $n\in\N$. Then, 
$$
\int_{B_{r}(0)}|u_{n}^{+}|^{2}dx\geq\int_{B_{\delta}(z_{n})}|u_{n}^{+}|^{2}dx>\eta, \quad \forall n \in \mathbb{N}.
$$
From this, $u_{n}\rightharpoonup u$ with $u\neq0$. Now, it is enough to repeat the arguments found \cite[page 23]{AG} to conclude that $u$ is a ground state solution for $(P)_{\epsilon}$. \end{proof}

\section{Concentration of maxima}

In this section, we denote by $u_\epsilon$ the ground state solution obtained in Section 3. Our main goal is to show that if $x_\epsilon$ is a maximum point of $|u_\epsilon|$, then 
$$
\lim_{\epsilon \to 0}A(\epsilon x_\epsilon)=A(0).
$$  
Of a more precise way, we must prove that if $\epsilon_n \to 0$,  for some subsequence, then $\epsilon_n x_{\epsilon_n} \to x_0$ for some $x_0 \in \mathcal{A}$ where
$$
\mathcal{A}=\{z \in \R^N\,:\,A(z)=A(0)\}.
$$

In what follows, we set  $(\epsilon_{n}) \subset (0,\epsilon_0)$ with $\epsilon_{n}\rightarrow0$, $I_n=I_{\epsilon_n}$, $c_{n}:=c_{\epsilon_{n}}$ and $u_{n}=u_{\epsilon_n}$, that is,
$$
I'_{n}(u_n)=0 \quad \mbox{and} \quad I_{n}(u_{n})=c_{n}.
$$  
By $(A_1)$,  $c_{n}\geq c_{0}>0$ for all $n\in\N$.

Next, we will show some technical lemmas that are crucial to get the concentration of maxima.  

\begin{lemma}
	The sequence $(u_{n})$ is bounded.
\end{lemma}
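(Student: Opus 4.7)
The plan is to argue by contradiction, following the outline of Proposition \ref{limitacao}, with the only novelty being that here the parameter $\epsilon$ varies along the sequence. Suppose, along a subsequence still denoted $(u_n)$, that $\|u_n\|\to+\infty$. Since $u_n\in\M_{\epsilon_n}$, the identity $I'_{\epsilon_n}(u_n)u_n=0$ combined with $(f_4)$, which gives $f(t)t\geq\theta F(t)\geq 0$, yields
$$\|u_n^+\|^2-\|u_n^-\|^2=\int A(\epsilon_n x)f(u_n)u_n\,dx\geq 0.$$
Since the decomposition $H^1(\R^N)=E^+\oplus E^-$ is orthogonal for the inner product considered, setting $v_n:=u_n/\|u_n\|$ gives $\|v_n^+\|^2\geq 1/2$ for every $n$.

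Next I would apply Lions' lemma to $(v_n^+)$. Either \emph{vanishing} holds, namely $\sup_{y\in\R^N}\int_{B_r(y)}|v_n^+|^2\,dx\to 0$ for every $r>0$, or there exist $(y_n)\subset\Z^N$ and $r,\eta>0$ with $\int_{B_r(y_n)}|v_n^+|^2\,dx\geq\eta$. In the vanishing case, $v_n^+\to 0$ in $L^p(\R^N)$ for every $p\in(2,2^*)$, so $(f_1)$--$(f_2)$ force $\int F(sv_n^+)\,dx\to 0$ for every fixed $s>0$. Since $sv_n^+\in\widehat{E}(u_n^+)$ and $u_n$ is the unique maximum of $I_{\epsilon_n}$ on $\widehat{E}(u_n^+)$ by (\ref{maximo2}), using $A(\epsilon_n x)\leq A(0)$ one gets
$$c_n=I_{\epsilon_n}(u_n)\geq I_{\epsilon_n}(sv_n^+)\geq \frac{s^2}{4}-A(0)\int F(sv_n^+)\,dx\longrightarrow \frac{s^2}{4};$$
as $c_n\to c_0$ by Lemma \ref{limitce}, choosing $s$ arbitrarily large gives a contradiction.

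In the non-vanishing case I would set $\widetilde{u}_n(x):=u_n(x+y_n)$ and $\widetilde{v}_n(x):=v_n(x+y_n)$. By Lemma \ref{translacao}, $\widetilde{v}_n^+=v_n^+(\cdot+y_n)$, so along a subsequence $\widetilde{v}_n\rightharpoonup v$ with $v^+\neq 0$. Since $\widetilde{u}_n=\|u_n\|\widetilde{v}_n$, we have $|\widetilde{u}_n(x)|\to+\infty$ wherever $v(x)\neq 0$. The condition $(f_4)$ implies $F(t)/t^2\to+\infty$ as $|t|\to+\infty$ and $F\geq 0$, hence by translation invariance and Fatou's lemma,
$$\int\frac{F(u_n)}{\|u_n\|^2}\,dx=\int\frac{F(\widetilde{u}_n)}{|\widetilde{u}_n|^2}|\widetilde{v}_n|^2\,dx\longrightarrow +\infty.$$
Combined with the uniform bound $A(\epsilon_n x)\geq A_0>0$, this forces
$$\frac{c_n}{\|u_n\|^2}=\frac{I_{\epsilon_n}(u_n)}{\|u_n\|^2}\leq \frac{1}{2}-A_0\int\frac{F(u_n)}{\|u_n\|^2}\,dx\longrightarrow -\infty,$$
contradicting $c_n\to c_0$ and finishing the proof. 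The main (mild) obstacle relative to Proposition \ref{limitacao} is to arrange the estimates so that they depend on $\epsilon_n$ only through the uniform bounds $A_0\leq A(\epsilon_n x)\leq A(0)$, together with the convergence $c_n\to c_0$ supplied by Lemma \ref{limitce}; with these ingredients, varying $\epsilon_n$ causes no genuine difficulty.
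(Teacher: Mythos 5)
Your proof is correct and takes essentially the same route as the paper: the paper's own proof of this lemma is the single line that it ``follows as in Proposition~\ref{limitacao}'', and your write-up is precisely that adaptation, replacing the fixed bound $d$ by the convergent sequence $c_n\to c_0$ (Lemma~\ref{limitce}) and absorbing the $n$-dependence of the weight into the uniform bounds $A_0\le A(\epsilon_n x)\le A(0)$.
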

\begin{proof}
The proof follows as in Proposition \ref{limitacao}.
\end{proof}

\begin{lemma}
There exist $(y_{n}) \subset \Z^{N}$ and  $R$, $\eta>0$ verifying 
$$
\int_{B_{R}(y_{n})}|u_{n}^{+}|^{2}dx\geq\eta,\ \ \forall n\in\N.
$$ 
\end{lemma}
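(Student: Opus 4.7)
The plan is to argue by contradiction using Lions' concentration--compactness principle, in the same spirit as in the proof of existence of a ground state in Section~3. First I would establish the uniform positive lower bound $\|u_n^+\|^2\geq 2c_0$ for every $n$. Indeed, condition $(f_4)$ implies $F\geq 0$, and since $I_n(u_n)=c_n\geq c_0$ for every $n$, the decomposition
\[
I_n(u_n)=\tfrac{1}{2}\|u_n^+\|^2-\tfrac{1}{2}\|u_n^-\|^2-\int A(\epsilon_n x)F(u_n)\,dx
\]
together with $F\geq 0$ gives $\tfrac{1}{2}\|u_n^+\|^2\geq c_n\geq c_0$. Combining this with the identity $\|u_n^+\|^2=\int A(\epsilon_n x)f(u_n)u_n^+\,dx$, which follows from $I_n'(u_n)u_n^+=0$, one obtains
\[
0<2c_0\leq \int A(\epsilon_n x)f(u_n)u_n^+\,dx,\qquad\forall\ n\in\N.
\]

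Next, I would assume toward a contradiction that vanishing occurs, i.e., $\sup_{y\in\R^N}\int_{B_R(y)}|u_n^+|^2\,dx\to 0$ for every $R>0$. Since $(u_n)$, and hence $(u_n^+)$, is bounded in $H^1(\R^N)$ by the previous lemma, Lions \cite[Lemma I.1]{lionsIII} then gives $u_n^+\to 0$ in $L^p(\R^N)$ for every $p\in(2,2^*)$. Using $(f_1)$ and $(f_2)$ to write $|f(t)|\leq \delta|t|+C_\delta|t|^q$ with $q+1\in(2,2^*)$, together with H\"older's inequality and the boundedness of $A$ from $(A_1)$, I would estimate
\[
\int A(\epsilon_n x)f(u_n)u_n^+\,dx\leq C\bigl(\delta|u_n|_2|u_n^+|_2+C_\delta|u_n|_{q+1}^q|u_n^+|_{q+1}\bigr).
\]
The second term vanishes as $n\to+\infty$ because $|u_n^+|_{q+1}\to 0$, while the first is bounded by $C'\delta$ with $C'$ independent of $n$. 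Passing to the $\limsup$ in $n$ and then letting $\delta\to 0$ forces $\int A(\epsilon_n x)f(u_n)u_n^+\,dx\to 0$, contradicting the lower bound $2c_0>0$ obtained above.

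Therefore vanishing cannot happen, so there exist $R,\eta>0$ and centers $\tilde y_n\in\R^N$ with $\int_{B_R(\tilde y_n)}|u_n^+|^2\,dx\geq\eta$ for every $n$. It only remains to upgrade the centers to $\Z^N$: for each $n$ pick $y_n\in\Z^N$ with $|\tilde y_n-y_n|\leq\sqrt{N}$ (for instance, componentwise floor), so that $B_R(\tilde y_n)\subset B_{R+\sqrt{N}}(y_n)$, and enlarge $R$ accordingly. The main subtlety is the derivation of the uniform positive lower bound on $\|u_n^+\|^2$, which relies crucially on $F\geq 0$ and on $c_n\geq c_0>0$; the remaining steps (Lions + subcritical growth + lattice rounding) are routine and closely parallel the existence proof.
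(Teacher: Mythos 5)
Your proof is correct and follows essentially the same route as the paper: assume vanishing, apply Lions' lemma to get $u_n^+\to 0$ in $L^p$, deduce $\|u_n^+\|^2=\int A(\epsilon_n x)f(u_n)u_n^+\,dx\to 0$, and contradict the lower bound $\|u_n^+\|\geq\sqrt{2c_n}\geq\sqrt{2c_0}$ (which the paper cites from Szulkin--Weth, Lemma 2.4, and which you prove directly from $F\geq0$). Your explicit treatment of the subcritical growth estimate and the rounding of centers to $\Z^N$ are welcome elaborations but do not change the argument.
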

\begin{proof}
If the lemma does not hold, by Lions \cite[Lemma I.1]{lionsIII}, $u_{n}^{+}\rightarrow0$ in $L^{p}(\R^{N})$ for all $p\in(2,2^{*})$. Therefore 
$||u_{n}^{+}||^{2}=\int A(\epsilon_{n}x)f(u_{n})u_{n}^{+}dx\rightarrow0$. On the other hand, from \cite[Lemma 2.4]{SW}, we know that $||u_{n}^{+}||\geq\sqrt{2c_{n}}\geq\sqrt{2c_{0}}$, which contradicts the last limit. 
\end{proof}

\vspace{0.5 cm}

In the sequel, $v_{n}(x):=u_{n}(x+y_{n})$ for all $x\in\R^{N}$. Thus, for some subsequence,  $v_{n}\rightharpoonup v\neq0$. 

\begin{lemma}
The sequence $(\epsilon_{n}y_{n})$ is bounded in $\R^{N}$. Furthermore, if for a subsequence $\epsilon_{n}y_{n}\rightarrow z$, then $z \in \mathcal{A}, I'_0(v)=0$ and $I_0(v)=c_0$. Hence, $v_0$ is a ground state solution for $(P)_0$.
\end{lemma}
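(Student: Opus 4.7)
The plan is to establish all three conclusions through a translation-and-limit procedure combined with the strict monotonicity of $\lambda \mapsto d_\lambda$ from Proposition \ref{continuidadec}. First, I would prove boundedness of $(\epsilon_n y_n)$ by contradiction: assume $|\epsilon_n y_n| \to \infty$ along a subsequence. For $\phi \in C_0^\infty(\R^N)$, substituting $\phi(\cdot - y_n)$ into $I_n'(u_n) = 0$ and applying the change of variables $x \mapsto x + y_n$ gives
\[
B(v_n, \phi) = \int A(\epsilon_n x + \epsilon_n y_n) f(v_n)\phi\, dx.
\]
Since $A(\epsilon_n x + \epsilon_n y_n) \to A_\infty$ uniformly on the support of $\phi$, $v_n \rightharpoonup v$ in $H^1(\R^N)$, and $v_n \to v$ strongly in $L^p_{loc}$ (with subcritical $f$), the limit yields $J'_{A_\infty}(v)\phi = 0$ for every $\phi$, hence by density $J'_{A_\infty}(v) = 0$. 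The previous lemma guarantees $\int_{B_R(0)} |v_n^+|^2\,dx \geq \eta$, so by Rellich--Kondrachov $v^+ \neq 0$, in particular $v \notin E^-$, so $v \in \mathcal{N}_{A_\infty}$ and $J_{A_\infty}(v) \geq d_{A_\infty}$. By $(f_4)$ the integrand $\tfrac{1}{2}f(t)t - F(t) \geq 0$, so Fatou's lemma applied to
\[
c_n = I_n(u_n) - \tfrac{1}{2} I_n'(u_n) u_n = \int A(\epsilon_n x)\bigl(\tfrac{1}{2} f(u_n) u_n - F(u_n)\bigr) dx
\]
combined with the change of variables $x \mapsto x + y_n$ yields $d_{A_\infty} \leq J_{A_\infty}(v) \leq \liminf_n c_n = c_0$, contradicting $c_0 < d_{A_\infty}$ which follows from Lemma \ref{limitce} and Corollary \ref{epsilonpequeno}.

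For the remaining statements, I would assume $\epsilon_n y_n \to z$ along a subsequence. Repeating the limiting procedure, but now using continuity of $A$ to get $A(\epsilon_n x + \epsilon_n y_n) \to A(z)$ locally uniformly, the weak limit $v$ satisfies $J'_{A(z)}(v) = 0$, and again $v^+ \neq 0$ gives $v \in \mathcal{N}_{A(z)}$. The analogous Fatou estimate yields
\[
d_{A(z)} \leq J_{A(z)}(v) = \int A(z)\bigl(\tfrac{1}{2} f(v) v - F(v)\bigr) dx \leq \liminf_n c_n = c_0 = d_{A(0)}.
\]
By strict monotonicity of $\lambda \mapsto d_\lambda$ in Proposition \ref{continuidadec}, $d_{A(z)} \leq d_{A(0)}$ forces $A(z) \geq A(0)$; since $A(0) = \max_{\R^N} A$ by $(A_1)$, equality holds and $z \in \mathcal{A}$. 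Then $J_{A(z)} = I_0$ and $\mathcal{N}_{A(z)} = \mathcal{M}_0$, hence $v \in \mathcal{M}_0$ with $I_0(v) \leq c_0$; combined with $I_0(v) \geq c_0$ from the definition of $c_0$, we conclude $I_0'(v) = 0$ and $I_0(v) = c_0$, so $v$ is a ground state for $(P)_0$.

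The main obstacle I anticipate is the careful passage to the limit in $\int A(\epsilon_n x + \epsilon_n y_n) f(v_n) \phi\, dx$, since the coefficient is varying and the target ($A_\infty$ versus $A(z)$) depends on whether $(\epsilon_n y_n)$ escapes to infinity or stays bounded. Boundedness and continuity of $A$, local uniform convergence on $\mathrm{supp}(\phi)$, and Rellich--Kondrachov combined with subcritical growth $(f_1)$--$(f_2)$ handle this; meanwhile the Fatou step on all of $\R^N$ is legitimate precisely because $(f_4)$ forces the integrand to be nonnegative. Everything else reduces to invoking the structural results already proven: continuity/monotonicity of $\lambda \mapsto d_\lambda$, the maximum characterization of $\mathcal{N}_\lambda$ in $\hat{E}(u)$, and the fact that $A(0) = \sup A$.
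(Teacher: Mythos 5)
Your proposal is correct and follows essentially the same route as the paper: translation of the test functions by $y_n$, passage to the limit in the weak formulation to identify $v$ as a nontrivial critical point of $J_{A_\infty}$ (resp.\ $J_{A(z)}$), the Fatou estimate on $\int A(\cdot)\left(\tfrac{1}{2}f(v)v-F(v)\right)dx$ to get $d_{A_\infty}\leq c_0$ (contradicting Corollary \ref{epsilonpequeno}) and $d_{A(z)}\leq c_0=d_{A(0)}$, and finally the strict monotonicity of $\lambda\mapsto d_\lambda$ together with $(A_1)$ to conclude $A(z)=A(0)$ and hence $I_0'(v)=0$, $I_0(v)=c_0$. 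The only difference is that you spell out more explicitly why $v\notin E^-$ (via $v^+\neq 0$ from the preceding lemma), which the paper leaves implicit.
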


\begin{proof}
First of all, we will prove the boundedness of the sequence $(\epsilon_{n}y_{n})$. Arguing by contradiction, suppose that for some subsequence $|\epsilon_{n}y_{n}|\rightarrow+\infty$. Since $u_{n}$ is a ground state solution for $(P)_{\epsilon_{n}}$,
$$
\int (\nabla u_{n}\nabla\phi(x-y_{n})+V(x)u_{n}\phi(x-y_{n})) dx=\int A(\epsilon_{n}x)f(u_{n})\phi(x-y_{n})dx, 
$$
for all $\phi\in C^{\infty}_{0}(\R^{N})$. Hence, by a change variable, 
$$
\int (\nabla v_{n}\nabla\phi+V(x)v_{n}\phi) dx=\int A(\epsilon_{n}x+\epsilon_{n}y_{n})f(v_{n})\phi dx
$$
for all $\phi\in C_{0}(\R^{N})$. Now, taking the limit as $n\rightarrow+\infty$, we find
$$
\int (\nabla v\nabla\phi\,dx+V(x)v\phi) dx=\int A_{\infty}f(v)\phi dx
$$
for all $\phi\in C_{0}(\R^{N})$. This combined with the density of $C^{\infty}_{0}(\R^{N})$ in $H^{1}(\R^N)$ gives  
$$
\int(\nabla v\nabla\psi+V(x)v\psi) dx=\int A_{\infty}f(v) \psi dx, \quad \forall \psi \in H^{1}(\R^N). 
$$
Then $v$ is a nontrivial solution of $(AP)_{A_\infty}$, and so, $v\in\M_{A_\infty}$. By Fatou's lemma,
$$
\begin{array}{ll}
d_{A_\infty} & \leq J_{A_\infty}(v)=J_{A_\infty}(v)-\frac{1}{2}J_{A_\infty}'(v)v=\int A_{\infty}\left(\frac{1}{2}f(v)v-F(v)\right)dx \\  
& \leq\liminf_{n \to +\infty}\int A(\epsilon x+\epsilon_{n}y_{n})\left(\frac{1}{2}f(v_{n})v_{n}-F(v_{n})\right)dx \\
& =\liminf_{n \to +\infty}\int A(\epsilon_{n}x)\left(\frac{1}{2}f(u_{n})u_{n}-F(u_{n})\right)dx \\
& =\liminf_{n \to +\infty}\left(I_{n}(u_{n})-\frac{1}{2}I_{n}'(u_{n})u_{n}\right) \\
& =\liminf_{n \to +\infty}I_{n}(u_{n})=\lim_{n\in\N}c_{n}=c_{0}<d_{A_\infty},
\end{array}
$$
obtaining a contradiction. Consequently $(\epsilon_{n}y_{n})$ is bounded, and we can assume that $\epsilon_{n}y_{n}\rightarrow z$. The same argument works to prove that
$$
\int (\nabla v\nabla\psi+V(x)v\psi) dx=\int A(z)f(v)\psi dx, \quad \forall  \psi \in H^{1}(\R^N).
$$
Hence $v$ is a nontrivial solution of $(AP)_{A(z)}$, and so, $v\in\M_{A(z)}$. The previous arguments lead to $d_{A(z)}\leq c_{0}=d_{A(0)}$. Then the monotonicity of $\lambda\rightarrow d_{\lambda}$ implies that $A(0)\leq A(z)$. As $A(0) \geq A(z)$, it follows that $A(0)=A(z)$, that is, $ z \in \mathcal{A}$. Moreover, the analysis above also gives $I'_0(v)=0$ and $I_0(v)=c_0$. 
\end{proof}

\vspace{0.5 cm}

From now on, we are considering that $\epsilon_{n}y_{n}\rightarrow z$ with $z \in \mathcal{A}$, i.e., $A(z)=A(0)$. Here, it is very important to observe that  
$$
J_{A(z)}=J_{A(0)}=I_0, \quad I'_{0}(v)=0 \quad \mbox{and} \quad I_0(v)=c_0.
$$

By the growth conditions on $f$, we  know that for each $\tau >0$ there exists $\delta:=\delta_{\tau}\in(0,1)$ such that 
$$
\frac{|f(t)|}{|t|}<\tau, \quad \forall t\in(-\delta,\delta).
$$ 
In what follows, we fix $g_{\tau}(t):=\chi_{\delta}(t)f(t)$ and $j_{\tau}(t):=\tilde{\chi}_\delta(t)f(t)$, where $\chi_\delta$ is the characteristic function on $(-\delta,\delta)$ and $\tilde{\chi}_\delta(t)=1-\chi_\delta(t)$.

\begin{lemma}\label{geje}
For each $\tau>0$, there is $c_{\tau}>0$ such that 
$$
|g_{\tau}(t)|\leq\tau|t| \quad  \mbox{and} \quad |j_{\tau}(t)|^{r}\leq c_{\tau}tf(t), \quad  \forall t \in \mathbb{R},
$$ 
where $r=\frac{q+1}{q}$ with $q$ given in $(f_2)$.
\end{lemma}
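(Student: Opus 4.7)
The plan is to verify the two inequalities separately by splitting $\mathbb{R}$ into the regions $|t|<\delta$ and $|t|\geq\delta$, where the cut–off $\chi_{\delta}$ is supported and not supported respectively.

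For the first estimate, the bound is immediate from the way $\delta=\delta_{\tau}$ was chosen. On $(-\delta,\delta)$ one has $|f(t)|/|t|<\tau$, hence $|g_{\tau}(t)|=|f(t)|\leq\tau|t|$, whereas on $|t|\geq\delta$ we have $g_{\tau}(t)=0$, so the inequality $|g_{\tau}(t)|\leq\tau|t|$ is trivial. No real work is involved.

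For the second estimate the trivial region is now $|t|<\delta$, where $j_{\tau}(t)=0$. On $|t|\geq\delta$ the key step is to establish a power–type majorant $|f(t)|\leq C_{\tau}|t|^{q}$ valid for all such $t$. First I would use $(f_{2})$ to produce $M>\delta$ and $C>0$ such that $|f(t)|\leq C|t|^{q}$ whenever $|t|\geq M$. On the compact annulus $\delta\leq|t|\leq M$ the continuity of $f$ gives a uniform bound $|f(t)|\leq K_{\tau}$, and since $|t|^{q}\geq\delta^{q}$ there, we can absorb $K_{\tau}$ into a constant $K_{\tau}/\delta^{q}$ times $|t|^{q}$. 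Taking $C_{\tau}=\max\{C,K_{\tau}/\delta^{q}\}$ yields $|f(t)|\leq C_{\tau}|t|^{q}$ on the whole set $|t|\geq\delta$.

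Next I would invoke $(f_{4})$, which forces $tf(t)>0$ for every $t\neq 0$, so $f(t)$ and $t$ share a sign and $tf(t)=|t|\,|f(t)|$. Combining this with the previous bound and $r=(q+1)/q=1+1/q$,
$$
|j_{\tau}(t)|^{r}=|f(t)|^{r}=|f(t)|\cdot|f(t)|^{1/q}\leq|f(t)|\cdot C_{\tau}^{1/q}|t|=C_{\tau}^{1/q}\,tf(t),
$$
so the lemma holds with $c_{\tau}:=C_{\tau}^{1/q}$. There is no real obstacle here; the whole statement is bookkeeping whose only mildly subtle point is to merge the asymptotic bound from $(f_{2})$ with continuity on a compact set to produce the $|t|^{q}$ majorant that is uniform on the full range $|t|\geq\delta$.
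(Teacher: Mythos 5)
Your proof is correct and follows essentially the same route as the paper's: both arguments combine the growth bound from $(f_2)$ at infinity with a compactness/continuity bound on the bounded region $\delta\leq|t|\leq M$, and both use the sign information from $(f_4)$ to replace $|t|\,|f(t)|$ by $tf(t)$. The only cosmetic difference is that you establish the majorant $|f(t)|\leq C_{\tau}|t|^{q}$ and raise it to the power $1/q$, whereas the paper bounds $|f(t)|^{r-1}/|t|$ directly; the two are equivalent.
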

\begin{proof}
By using the definition of $g_\tau$, it is obvious that  first inequality involving the function $g_\tau$ is true. 

In order to prove the second inequality, note that $[-1,-\delta]\cup[\delta,1]\subset\R$ is compact set, then there exists $\widetilde{c_{\tau}}>0$ such that 
$$
\frac{|f(t)|^{r-1}}{|t|}\leq\widetilde{c_{\tau}}, \quad \forall t\in[-1,-\delta]\cup[\delta,1],
$$ 
consequently 
$$
|j_{\tau}(t)|^{r-1}\leq\widetilde{c_{\tau}}|t|, \quad \forall t\in[-1,-\delta]\cup[\delta,1].
$$ 
On the other hand, there exists $\widetilde{b_{\tau}}>0$ verifying 
$$
|f(t)|\leq\tau|t|+\widetilde{b_{\tau}}|t|^{q},\ \forall\ t\in\R.
$$
Thus, there exist $A_{\tau}, B_{\tau}, \widehat{c_{\tau}}>0$ such that
$$
|j_{\tau}(t)|^{r-1}=|f(t)|^{r-1}\leq A_{\tau}|t|^{r-1}+B_{\tau}|t|^{(r-1)q}=A_{\tau}|t|^{r-1}+B_{\tau}|t|\leq\widehat{c_{\tau}}|t|, \quad \forall |t|>1. 
$$
From this,  
$$
|j_{\tau}(t)|^{r-1}\leq c_{\tau}|t|, \quad \forall t\in\R,
$$
for some $c_{\tau}>0$. Thereby, 
$$
|j_{\tau}(t)|^{r}\leq c_{\tau}|t||j_\tau(t)|\leq c_{\tau}tf(t), \quad \forall t\in\R,
$$
finishing the proof.
\end{proof}

The last lemma permits us to prove an important convergence involving the sequence $(v_n)$.

\begin{proposition}\label{convergenciaforte}
The sequence $(v_{n})$ converges strongly to $v$ in $H^{1}(\R^N)$.
\end{proposition}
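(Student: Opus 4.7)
The plan is to work with the equivalent inner product $\langle\cdot,\cdot\rangle$ on $H^{1}(\R^N)$ coming from the spectral decomposition $H^{1}(\R^N)=E^+\oplus E^-$, in which $\|u^+\|^2-\|u^-\|^2=\int(|\nabla u|^2+V(x)u^2)\,dx$. Since the projections onto $E^\pm$ are weakly continuous, $v_n\rightharpoonup v$ gives $v_n^{\pm}\rightharpoonup v^{\pm}$, so Hilbert-space geometry reduces strong convergence of $v_n$ to the two scalar limits $\|v_n^{\pm}\|\to\|v^{\pm}\|$. Using the $\Z^N$-periodicity of $V$ together with $y_n\in\Z^N$, a change of variable shows that $v_n$ is a critical point of the translated functional $\widetilde{I}_n(u)=\frac{1}{2}\|u^+\|^2-\frac{1}{2}\|u^-\|^2-\int A(\epsilon_n x+\epsilon_n y_n)F(u)\,dx$. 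Testing $\widetilde{I}_n'(v_n)\phi=0$ first against $\phi=v_n^+-v^+\in E^+$ and then against $\phi=v_n^--v^-\in E^-$, and using $\langle\cdot,\cdot\rangle$-orthogonality of $E^+$ and $E^-$, yields
\[
\|v_n^\pm\|^2-\langle v_n^\pm,v^\pm\rangle=\pm\int A(\epsilon_n x+\epsilon_n y_n)f(v_n)(v_n^\pm-v^\pm)\,dx.
\]
Because $\langle v_n^\pm,v^\pm\rangle\to\|v^\pm\|^2$ by weak convergence, the whole problem reduces to showing that for any bounded $(\psi_n)\subset H^{1}(\R^N)$ with $\psi_n\rightharpoonup 0$ the integral $\int A(\epsilon_n x+\epsilon_n y_n)f(v_n)\psi_n\,dx$ tends to zero.

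To control this integral I would invoke the splitting $f=g_\tau+j_\tau$ supplied by Lemma \ref{geje}. Since $|g_\tau(t)|\le\tau|t|$ and $A\in L^\infty(\R^N)$, Cauchy--Schwarz and the $L^2$-boundedness of $(v_n),(\psi_n)$ give $|\int A(\epsilon_n x+\epsilon_n y_n)g_\tau(v_n)\psi_n\,dx|\le C\tau$ with $C$ independent of $n$ and $\tau$. For the $j_\tau$-piece, Hölder's inequality with conjugate exponents $r=(q+1)/q$ and $q+1$, combined with $|j_\tau(t)|^r\le c_\tau tf(t)$, yields
\[
\Bigl|\int_{E} A(\epsilon_n x+\epsilon_n y_n)j_\tau(v_n)\psi_n\,dx\Bigr|\le\|A\|_\infty c_\tau^{1/r}\Bigl(\int_{E} v_n f(v_n)\,dx\Bigr)^{1/r}\|\psi_n\|_{L^{q+1}(E)}
\]
for any measurable $E\subset\R^N$. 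Splitting the integration domain into $E=B_R(0)$ and $E=B_R(0)^c$, the piece over $B_R(0)$ vanishes as $n\to\infty$ for each fixed $R$ because of the Rellich--Kondrachov compactness $H^{1}(B_R)\hookrightarrow L^{q+1}(B_R)$ and the resulting $\|\psi_n\|_{L^{q+1}(B_R)}\to 0$.

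The main obstacle is obtaining uniform smallness of $\int_{B_R^c}v_n f(v_n)\,dx$ as $R\to\infty$, and this is where I would exploit that $c_n\to c_0=I_0(v)$. Subtracting $\tfrac{1}{2}I_n'(u_n)u_n=0$ from $I_n(u_n)=c_n$ and translating gives
\[
\int A(\epsilon_n x+\epsilon_n y_n)\Bigl[\tfrac{1}{2}f(v_n)v_n-F(v_n)\Bigr]\,dx\longrightarrow c_0,
\]
while $I_0'(v)v=0$ together with $I_0(v)=c_0$ yields $\int A(z)[\tfrac{1}{2}f(v)v-F(v)]\,dx=c_0$. By $(f_4)$ the integrands are non-negative, so after passing to a subsequence for which $v_n\to v$ a.e., Fatou's lemma and the equality of limits force $L^1$-convergence of $A(\epsilon_n x+\epsilon_n y_n)[\tfrac{1}{2}f(v_n)v_n-F(v_n)]$ to $A(z)[\tfrac{1}{2}f(v)v-F(v)]$. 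Combining the lower bound $A\ge A_0>0$ with the inequality $\tfrac{1}{2}f(t)t-F(t)\ge(\tfrac{1}{2}-\tfrac{1}{\theta})f(t)t$ from $(f_4)$ then upgrades this to $v_n f(v_n)\to v f(v)$ in $L^{1}(\R^N)$, so $\int_{B_R^c}v_n f(v_n)\,dx\to\int_{B_R^c}v f(v)\,dx$, which tends to zero as $R\to\infty$. Passing first $n\to\infty$, then $R\to\infty$, and finally $\tau\to 0^+$ in the Hölder estimate above gives $\int A(\epsilon_n x+\epsilon_n y_n)f(v_n)\psi_n\,dx\to 0$, completing the proof.
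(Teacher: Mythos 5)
Your proposal is correct and follows essentially the same route as the paper: the identity $I_n(u_n)-\tfrac12 I_n'(u_n)u_n=c_n\to c_0$ combined with Fatou's lemma and the nonnegativity from $(f_4)$ to get $L^1$-convergence of the translated nonlinear density, then the $g_\tau/j_\tau$ splitting of Lemma \ref{geje} with a $B_R/B_R^c$ decomposition to pass to the limit in $\|v_n^{\pm}\|$. The only cosmetic differences are that you test against $v_n^{\pm}-v^{\pm}$ rather than $v_n^{\pm}$ and use a generalized dominated convergence argument where the paper extracts an $L^1$ dominating function $H$ from the Br\'ezis--Lieb step.
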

\begin{proof}
First of all, note that
$$
\begin{array}{ll}
c_{0} \leq & I_{0}(v)=I_{0}(v)-\frac{1}{2}I'_{0}(v)v=\int A(0)\left(\frac{1}{2}f(v)v-F(v)\right)dx \\
& =\int A(z)\left(\frac{1}{2}f(v)v-F(v)\right)dx \\
& \leq\liminf_{n \to +\infty}\int A(\epsilon_{n} x+\epsilon_{n}y_{n})\left(\frac{1}{2}f(v_{n})v_{n}-F(v_{n})\right)dx \\ 
& \leq\limsup_{n \to +\infty}\int A(\epsilon_{n} x+\epsilon_{n}y_{n})\left(\frac{1}{2}f(v_{n})v_{n}-F(v_{n})\right)dx \\
& =\limsup_{n \to +\infty}\int A(\epsilon_{n}x)\left(\frac{1}{2}f(u_{n})u_{n}-F(u_{n})\right)dx \\
& =\limsup_{n \to +\infty}\left(I_{n}(u_{n})-\frac{1}{2}I'_{n}(u_{n})u_{n}\right) \\
&=\lim_{n \to +\infty}c_{n}=c_{0}.
\end{array}
$$
Therefore 
$$\lim_{n\to +\infty}\int A(\epsilon_{n} x+\epsilon_{n}y_{n})\left(\frac{1}{2}f(v_{n})v_{n}-F(v_{n})\right)dx=\int A(z)\left(\frac{1}{2}f(v)v-F(v)\right)dx.$$
Since 
$$
A(\epsilon_{n} x+\epsilon_{n}y_{n})\left(\frac{1}{2}f(v_{n})v_{n}-F(v_{n})\right)\geq 0, \quad \forall n \in \mathbb{N},
$$ 
and supposing that
$$
v_n(x) \to v(x) \quad \mbox{a.e. in} \quad \mathbb{R}^N,
$$
we deduce that
$$
A(\epsilon_{n} x+\epsilon_{n}y_{n})\left(\frac{1}{2}f(v_{n})v_{n}-F(v_{n})\right)\rightarrow A(z)\left(\frac{1}{2}f(v)v-F(v)\right) \quad \mbox{a.e. in} \quad \mathbb{R}^N.
$$
Fixing
$$
h_n(x)=A(\epsilon_{n} x+\epsilon_{n}y_{n})\left(\frac{1}{2}f(v_{n})v_{n}-F(v_{n})\right)\geq 0 \quad \mbox{and} \quad h(x)= A(z)\left(\frac{1}{2}f(v)v-F(v)\right)\geq 0
$$
we infer that 
$$
h_n(x) \to h(x)  \quad \mbox{a.e. in} \quad \mathbb{R}^N 
$$
and 
$$
|h_n|_1=\int_{\mathbb{R}^N}|h_n(x)|\,dx=\int_{\mathbb{R}^N}h_n(x)\,dx \to |h|_1=\int_{\mathbb{R}^N}|h(x)|\,dx=\int_{\mathbb{R}^N}h(x)\,dx. 
$$
However, by a result due to Br\'ezis-Lieb \cite{BL}, we know that 
$$
|h|_1=\lim_{n \to +\infty}(|h_n|_1-|h_n-h|_1), 
$$
from where it follows 
$$
\lim_{n \to +\infty}|h_n-h|_1=0, 
$$
that is, 
$$
h_n \to h \quad \mbox{in} \quad L^{1}(\mathbb{R}^N),
$$
or equivalently
$$
A(\epsilon_{n} x+\epsilon_{n}y_{n})\left(\frac{1}{2}f(v_{n})v_{n}-F(v_{n})\right) \to A(z)\left(\frac{1}{2}f(v)v-F(v)\right) \quad \mbox{in} \quad L^{1}(\mathbb{R}^N).
$$
Thus, for some subsequence, there exists $H \in L^{1}(\R^{N})$ such that 
$$
A_0 \left(\frac{1}{2}f(v_{n})v_{n}-F(v_{n})\right)\leq A(\epsilon_{n} x+\epsilon_{n}y_{n})\left(\frac{1}{2}f(v_{n})v_{n}-F(v_{n})\right)\leq H \quad \mbox{a.e. in} \quad \R^N
$$
for all $n\in\N$. Then, by $(f_4)$,
$$
A_0 \left(\frac{1}{2}-\frac{1}{\theta}\right)f(v_{n})v_{n}\leq H, \quad \forall n \in \mathbb{N}.
$$
Consequently there exists $c>0$ such that
$$
f(v_{n})v_{n}\leq c\,H, \quad \forall n \in \mathbb{N}. 
$$
In what follows, we set
$$
Q_{n}:=f(v_{n})v_{n}^{+}-f(v)v^{+}. 
$$
Our goal is to prove that 
$$
\int |Q_{n}|dx\rightarrow0.
$$ 
First of all, as $f$ has a subcritical growth,  
\begin{equation} \label{g}
\int_{B_{R}(0)}|Q_{n}|dx\rightarrow 0, \quad \forall R>0.
\end{equation} 
On the other hand, for each $\tau>0$,  we can fix $R$ large enough a such way that 
$$
\int_{B_{R}(0)^{c}}|f(v)v^{+}|dx<\tau.
$$ 
\begin{claim} \label{intf} Increasing $R$ if necessary, we also have 
$$
\int_{B_{R}(0)^{c}}|f(v_{n})v_{n}^{+}|dx<2\Theta\tau, \quad \forall n \in \mathbb{N}
$$
where
$$
\Theta:=\sup_{n\in\N}\left\{\left(\int|v_{n}^{+}|^{q+1}dx\right)^{\frac{1}{q+1}},\int|v_{n}v_{n}^{+}|dx\right\}.
$$
\end{claim}
In fact, for each $\tau>0$, the Lemma \ref{geje} ensures the existence of $c_{\tau}>0$ such that 
$$
|j_{\tau}(t)|^{r}\leq c_{\tau}tf(t), \quad \mbox{where} \quad r=\frac{q+1}{q}. 
$$
From Lemma \ref{geje}, 
$$\int_{B_{R}(0)^{c}}|f(v_{n})v_{n}^{+}|dx=\int_{B_{R}(0)^{c}}|g_{\tau}(v_{n})||v_{n}^{+}|dx+\int_{B_{R}(0)^{c}}|j_{\tau}(v_{n})||v_{n}^{+}|dx\leq$$
$$\leq\tau\int_{B_{R}(0)^{c}}|v_{n}||v_{n}^{+}|dx+\left(\int_{B_{R}(0)^{c}}|j_{\tau}(v_{n})|^{r}dx\right)^{1/r}\left(\int_{B_{R}(0)^{c}}|v_{n}^{+}|^{q+1}dx\right)^{1/(q+1)}$$
$$
\leq\tau\Theta+\left(\int_{B_{R}(0)^{c}}c_\tau f(v_{n})v_{n}dx\right)^{1/r}\Theta\leq\tau\Theta+c_\tau\left(\int_{B_{R}(0)^{c}}cH\ dx\right)^{1/r}\Theta.
$$
Now, increasing $R$ if necessary, a such way that  
$$
c_\tau\left(\int_{B_{R}(0)^{c}}cH\ dx\right)^{1/r} < \tau
$$
we get
$$
\int_{B_{R}(0)^{c}}|f(v_{n})v_{n}^{+}|dx\leq 2\tau\Theta,
$$
proving the claim. From (\ref{g}) and Claim \ref{intf},
$$
\int |Q_n|\,dx \to 0.  
$$
Therefore
$$
f(v_{n})v_{n}^{+}\rightarrow f(v)v^{+}\text{ in }L^{1}(\R^{N}).
$$
Analogously,
$$
f(v_{n})v_{n}^{-}\rightarrow f(v)v^{-}\text{ in }L^{1}(\R^{N}).
$$
Since $I_{n}'(u_{n})u_{n}^{+}=0$, it follows that
$$
||v_{n}^{+}||^{2}=\int A(\epsilon_{n}x+\epsilon_{n}y_{n})f(v_{n})v_{n}^{+}dx\rightarrow\int A(z)f(v)v^{+}dx=||v^{+}||^{2},
$$
showing that $v_{n}^{+}\rightarrow v^{+}$ in $H^{1}(\R^N)$, because $v_{n}^{+}\rightharpoonup v^{+}$ in $H^{1}(\R^N)$. Likewise $v_{n}^{-}\rightarrow v^{-}$ in $H^{1}(\R^N)$. Thereby $v_{n}=v_{n}^{+}+v_{n}^{-}\rightarrow v^{+}+v^{-}=v$ in $H^{1}(\R^N)$, finishing the proof.
\end{proof}
\begin{corollary}\label{uniformementeparazero}
$||v_{n}||_{L^{\infty}(\R^{N})}\not\rightarrow0$.
\end{corollary}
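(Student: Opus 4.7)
The plan is to argue by contradiction, combining the strong convergence $v_n \to v$ in $H^{1}(\mathbb{R}^N)$ furnished by Proposition \ref{convergenciaforte} with the nontriviality $v \neq 0$ already obtained when proving that $(\epsilon_n y_n)$ is bounded. Suppose, toward a contradiction, that $\|v_n\|_{L^{\infty}(\mathbb{R}^N)} \to 0$. Then $v_n \to 0$ uniformly on $\mathbb{R}^N$, so in particular $v_n(x) \to 0$ for every $x$. On the other hand, strong convergence in $H^{1}(\mathbb{R}^N)$ implies strong convergence in $L^{2}(\mathbb{R}^N)$, and after extracting a subsequence we also have $v_n(x) \to v(x)$ for a.e.\ $x \in \mathbb{R}^N$. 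Comparing the two pointwise limits forces $v \equiv 0$ a.e., contradicting $v \neq 0$.

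An alternative route, which does not rely on Proposition \ref{convergenciaforte} but only on the $H^{1}$-boundedness of $(v_n)$, would go through $(f_1)$: for any $\tau > 0$ there exists $\delta > 0$ with $|f(t)| \leq \tau |t|$ whenever $|t| \leq \delta$. If $\|v_n\|_{L^{\infty}} \to 0$, then for all sufficiently large $n$ we have $|f(v_n(x))| \leq \tau |v_n(x)|$ pointwise. Using the identity $I_n'(u_n) u_n^{+} = 0$ after the change of variables $x \mapsto x + y_n$ (recall from Lemma \ref{translacao} that $\|v_n^{+}\| = \|u_n^{+}\|$), together with Cauchy--Schwarz, the Sobolev embedding $H^{1} \hookrightarrow L^{2}$, and the uniform $H^{1}$ bound on $(v_n)$, one obtains an estimate of the form $\|v_n^{+}\|^{2} \leq C \tau \|v_n\|\,\|v_n^{+}\|$, i.e.\ $\|v_n^{+}\| \leq C'\tau$. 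Choosing $\tau$ small enough contradicts the lower bound $\|v_n^{+}\| = \|u_n^{+}\| \geq \sqrt{2 c_n} \geq \sqrt{2 c_0} > 0$ already invoked in the earlier lemmas.

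The argument does not present any genuine obstacle; the corollary is essentially a clean consequence of the strong $H^{1}$ convergence proved just above, and the only point one must remember is to pass to a subsequence along which $v_n \to v$ pointwise almost everywhere in order to compare with the supposed uniform limit $0$.
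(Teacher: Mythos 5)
Your first argument is exactly the paper's proof: the authors also note that $\|v_n\|_{L^\infty(\R^N)}\to 0$ together with the strong convergence $v_n\to v$ from Proposition \ref{convergenciaforte} would force $v=0$, contradicting $v\neq 0$. The proposal is correct, and the alternative route via $(f_1)$ is a valid (if unnecessary) bonus.
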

\begin{proof}
If $||v_{n}||_{L^{\infty}(\R^{N})}\rightarrow0$, by Proposition \ref{convergenciaforte}, we must have  $v=0$, which is absurd.
\end{proof}
\begin{lemma} \label{Regularidadeinfinito}
For all $n\in\N$, $v_{n}\in C(\R^{N})$. Furthermore, there exist a continuous function $P:\R\rightarrow\R$  with $P(0)=0$ and $K>0$ such that
$$
||v_{n}||_{C(\overline{B_{1}(z)})}\leq K\cdot P\left(||v_{n}||_{L^{2^{*}}(B_{2}(z))}\right),
$$
for all $n\in\N$ and for all $z\in\R^{N}$.
\end{lemma}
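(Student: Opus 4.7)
The goal is to combine standard interior elliptic regularity (for continuity) with a Moser-type iteration (for the quantitative estimate), exploiting the fact that each $v_n$ solves an elliptic equation whose coefficients are uniformly bounded in $n$ and translation-invariant in the estimates.

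First, I would establish continuity. Each $v_n$ is a weak solution of
$$
-\Delta v_{n}+V(x)v_{n}=A(\epsilon_{n}x+\epsilon_{n}y_{n})f(v_{n})\quad\text{in }\R^{N}.
$$
Since $V$ is continuous and $\Z^{N}$-periodic it is globally bounded, $A$ is bounded by $(A_{1})$, and $(f_{1})$--$(f_{2})$ give $|f(t)|\leq C_{1}(|t|+|t|^{q})$ with $q\in(1,2^{*}-1)$. Because $v_{n}\in H^{1}(\R^{N})\hookrightarrow L^{2^{*}}$, the right-hand side lies in $L^{2^{*}/q}_{\mathrm{loc}}(\R^{N})$ and the exponent $s:=2^{*}/q$ satisfies $s>N/2$ by subcriticality. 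Interior $L^{p}$ regularity then yields $v_{n}\in W^{2,s}_{\mathrm{loc}}(\R^{N})$, which embeds into $C^{0}_{\mathrm{loc}}(\R^{N})$ since $2s>N$.

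Second, I would obtain the quantitative estimate by Moser iteration. Rewriting the equation as
$$
-\Delta v_{n}=b_{n}(x)v_{n},\qquad b_{n}(x):=A(\epsilon_{n}x+\epsilon_{n}y_{n})\frac{f(v_{n}(x))}{v_{n}(x)}-V(x),
$$
the growth assumptions on $f$ together with the bounds on $V,A$ give $|b_{n}(x)|\leq C_{2}(1+|v_{n}(x)|^{q-1})$ with $C_{2}$ independent of $n$. Fix $z\in\R^{N}$ and a decreasing sequence of radii $2=r_{0}>r_{1}>\cdots\to 1$. Testing against $\phi_{k}^{2}|v_{n}|^{2\beta_{k}}v_{n}$ with a cutoff $\phi_{k}\in C_{c}^{\infty}(B_{r_{k}}(z))$ equal to $1$ on $B_{r_{k+1}}(z)$, and then applying the Sobolev inequality, produces the standard reverse-H\"older-type step
$$
\|v_{n}\|_{L^{\gamma_{k+1}}(B_{r_{k+1}}(z))}\leq C_{k}\,\|v_{n}\|_{L^{\gamma_{k}}(B_{r_{k}}(z))}^{\theta_{k}},
$$
where $\gamma_{k+1}=\sigma\gamma_{k}$ with $\sigma=2^{*}/2>1$, and $C_{k}$, $\theta_{k}$ depend only on $\beta_{k}$, $\|V\|_{\infty}$, $\|A\|_{\infty}$, $C_{2}$, $N$, $q$ and the radii (hence not on $n$ nor on $z$). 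Iterating starting from $\gamma_{0}=2^{*}$ and summing the geometric series in the exponents produces
$$
\|v_{n}\|_{L^{\infty}(B_{1}(z))}\leq K\cdot P\bigl(\|v_{n}\|_{L^{2^{*}}(B_{2}(z))}\bigr),
$$
where $P(s)=s+s^{\kappa}$ for some $\kappa>1$ coming from the contribution of the $|v_{n}|^{q-1}$ part of $b_{n}$. Since the iteration constants are translation-invariant, $K$ and $P$ are independent of $z$, and by construction $P$ is continuous with $P(0)=0$.

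The main obstacle is bookkeeping in the Moser iteration: one must absorb the $|v_{n}|^{q-1}$ piece of $b_{n}$ (which is \emph{a priori} unbounded) into the $L^{\sigma\gamma_{k}}$ term at each step via H\"older with exponent determined by $q<2^{*}-1$, and then verify that the resulting product of step-constants converges and depends continuously on the initial $L^{2^{*}}$ norm in such a way that the final bound vanishes as that norm tends to $0$. The strict subcritical inequality $q<2^{*}-1$ is exactly what makes the geometric series in the Moser scheme converge and what permits $P$ to be taken with $P(0)=0$.
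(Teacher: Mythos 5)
Your overall strategy---derive a uniform local $L^{\infty}$ bound by an iteration scheme and read off continuity afterwards---is viable, but it is technically a different route from the paper's. The paper only verifies that the nonlinearity $\Psi_{n}(x,t)=A(\epsilon_{n}x+\epsilon_{n}y_{n})f(t)$ obeys $|\Psi_{n}(x,t)|\leq C(|t|+|t|^{q})$ with $C$ independent of $n$, checks that $u\in L^{s}(B_{2}(z))$ forces $\Psi_{n}(\cdot,u)\in L^{s/q}(B_{2}(z))$ with a bound uniform in $n$ and $z$, and then delegates everything to the bootstrap of \cite[Proposition 2.15]{Rab}, which iterates interior $W^{2,p}$ (Calder\'on--Zygmund) estimates and Sobolev embeddings until the integrability exponent exceeds $N/2$. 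Your Moser/Brezis--Kato iteration with test functions $\phi_{k}^{2}|v_{n}|^{2\beta_{k}}v_{n}$ reaches the same conclusion and is more self-contained; the two ingredients you single out (translation invariance of the constants and strict subcriticality $q<2^{*}-1$) are indeed the ones that matter.

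There is, however, a genuine error in your first step: it is not true that $s:=2^{*}/q>N/2$ ``by subcriticality.'' From $q<2^{*}-1=\frac{N+2}{N-2}$ one only gets $2^{*}/q>\frac{2N}{N+2}$, and $\frac{2N}{N+2}<\frac{N}{2}$ for every $N\geq3$; for instance with $N=6$ and $q=1.9$ one has $2^{*}/q\approx1.58<3=N/2$. Hence a single application of interior $L^{p}$ regularity does not place $v_{n}$ in $W^{2,s}_{\mathrm{loc}}$ with $2s>N$, and continuity does not follow in one shot. The inequality that subcriticality actually delivers is the one governing your coefficient $b_{n}$: since $q-1<2^{*}-2=\frac{4}{N-2}$, one has $2^{*}/(q-1)>N/2$, so $|v_{n}|^{q-1}\in L^{s'}_{\mathrm{loc}}$ with $s'>N/2$, which is precisely the Brezis--Kato condition that makes your Moser scheme iterate. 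The repair is straightforward: drop the one-step continuity argument and deduce continuity only after the $L^{\infty}_{\mathrm{loc}}$ bound (once $v_{n}\in L^{\infty}_{\mathrm{loc}}$, the right-hand side lies in every $L^{p}_{\mathrm{loc}}$, so $v_{n}\in W^{2,p}_{\mathrm{loc}}\hookrightarrow C^{0,\alpha}_{\mathrm{loc}}$), or iterate the $L^{p}$ bootstrap finitely many times as in the paper's reference. Note also that, contrary to your parenthetical claim, the step constants $C_{k}$ cannot be independent of $n$ and $z$, since absorbing the $|v_{n}|^{q-1}$ piece by H\"older introduces a factor controlled by $\|v_{n}\|_{L^{2^{*}}(B_{2}(z))}^{q-1}$; this is harmless because that dependence is exactly what ends up inside the function $P$, but the bookkeeping must record it. With these corrections the proposal is sound.
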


\begin{proof}
Since $u_{n}$ is solution of $(P)_{\epsilon_{n}}$, $v_{n}$ is a solution of
$$
\left\{
\begin{array}{l} -\Delta v_{n}+V(x)v_{n}=A(\epsilon_{n}x+\epsilon_{n}y_{n})f(v_{n})\quad \mbox{in} \quad \R^{N}, \\ v_{n}\in H^{1}(\R^{N}).
\end{array}
\right.
$$
Setting  $\Psi_{n}(x,t):=A(\epsilon_{n}x+\epsilon_{n}y_{n})f(t)$, it is easy to check that there exists $C>0$, independently of $n\in\N$, verifying 
$$
\Psi_{n}(x,t)\leq C(|t|+|t|^{q}), \quad \forall x \in \R^N \quad \mbox{and} \quad \forall t \in \R.
$$
Moreover, for each $z \in \R^N$ we have  that $u\in L^{s}(B_{2}(z))$ with $s\geq q$, $\Psi_{n}(\cdot, u(\cdot))\in L^{s/q}(B_{2}(z))$ and there exist $C_s=C(s)>0$, independent of $z$, such that 
$$
||\Psi_{n}(\cdot,u(\cdot))||_{L^{s/q}(B_{2}(z))}\leq C_s(||u||_{L^{s/q}(B_{2}(z))}+||u||^{q}_{L^{s}(B_{2}(z))}), \quad \forall n \in \mathbb{N}. 
$$
Here we have used the fact that $A$ is a bounded function. Now, recalling that potential $V$ is also a bounded function, we can proceed in the same manner as in  \cite[Proposition 2.15]{Rab} to get the desired result. 
\end{proof}

\vspace{0.5 cm}

As a byproduct of the last lemma we have the corollary below
\begin{corollary}\label{uniformementec0}
Given $\delta>0$, there exists $R:=R_{\delta}>0$ such that $|v_{n}(x)|\leq \delta$ for all $x\in\R^{N}\setminus B_{R}(0)$, that is, $\lim_{|x| \to +\infty}v_n(x)=0$ uniformly in $\mathbb{N}$. 
\end{corollary}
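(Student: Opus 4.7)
The strategy is to combine the uniform elliptic estimate from Lemma \ref{Regularidadeinfinito} with the strong $H^1$ convergence $v_n \to v$ established in Proposition \ref{convergenciaforte}. The point is that decay at infinity of a \emph{single} function $v \in H^1(\mathbb{R}^N)$ is automatic (in the $L^{2^*}$-sense on balls), and the estimate transfers this decay uniformly to the tail of the sequence because the tails of $v_n$ and $v$ are close in $L^{2^*}$.

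First I would fix $\delta > 0$ and use continuity of $P$ at $0$ (with $P(0) = 0$) to choose $\tau > 0$ so that $K \cdot P(s) < \delta$ whenever $0 \leq s < \tau$. Next, since $v \in H^1(\mathbb{R}^N) \hookrightarrow L^{2^*}(\mathbb{R}^N)$, the absolute continuity of the integral gives $R_0 > 0$ with $\|v\|_{L^{2^*}(B_{R_0}(0)^c)} < \tau/2$. By Proposition \ref{convergenciaforte} and the Sobolev embedding, $v_n \to v$ in $L^{2^*}(\mathbb{R}^N)$, so there is $n_0 \in \mathbb{N}$ such that $\|v_n - v\|_{L^{2^*}(\mathbb{R}^N)} < \tau/2$ for all $n \geq n_0$.

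For any $z \in \mathbb{R}^N$ with $|z| \geq R_0 + 2$ we have $B_2(z) \subset B_{R_0}(0)^c$, hence for $n \geq n_0$,
$$
\|v_n\|_{L^{2^*}(B_2(z))} \leq \|v\|_{L^{2^*}(B_{R_0}(0)^c)} + \|v_n - v\|_{L^{2^*}(\mathbb{R}^N)} < \tau.
$$
Applying Lemma \ref{Regularidadeinfinito} with this $z$ gives $\|v_n\|_{C(\overline{B_1(z)})} \leq K \cdot P(\tau) < \delta$ (using monotone control via the choice of $\tau$); covering $\{|x| \geq R_0 + 3\}$ by such balls $B_1(z)$ yields $|v_n(x)| \leq \delta$ there, for every $n \geq n_0$.

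The remaining task is to handle the finitely many indices $n < n_0$. For each such fixed $n$, since $v_n \in L^{2^*}(\mathbb{R}^N)$, we again have $\|v_n\|_{L^{2^*}(B_2(z))} \to 0$ as $|z| \to \infty$, so Lemma \ref{Regularidadeinfinito} produces $R_n > 0$ with $|v_n(x)| \leq \delta$ for $|x| > R_n$. Taking $R := \max\{R_0 + 3,\, R_1, \ldots, R_{n_0-1}\}$ finishes the proof. The only mildly delicate step is verifying that $\|v_n\|_{L^{2^*}(B_2(z))}$ can indeed be made smaller than $\tau$ uniformly in $n$ for large $|z|$; the argument above reduces this to the single limit function $v$ via the strong $L^{2^*}$ convergence, which is the main leverage one needs.
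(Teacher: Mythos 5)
Your proposal is correct and follows essentially the same route as the paper: both deduce from the strong convergence $v_n \to v$ in $H^1(\mathbb{R}^N)$ that $\|v_n\|_{L^{2^*}(B_2(z))}$ is uniformly small for $|z|$ large, and then invoke Lemma \ref{Regularidadeinfinito} together with the continuity of $P$ at $0$. The only difference is that you spell out the $\tau/2$-splitting and the separate treatment of the finitely many indices $n < n_0$, which the paper leaves implicit in its one-line assertion of the uniform tail estimate.
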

\begin{proof} Since $v_n \to v$ in $H^{1}(\R^N)$, given $\tau>0$ there are $R>0$ such that
$$
\|v_n\|_{L^{2^{*}}(B_{2}(z))} < \tau, \quad \mbox{for all} \,\,\, |z| \geq R \quad \mbox{and} \quad  n \in \mathbb{N}.
$$	
As $P$ is a continuous function and $P(0)=0$, given $\beta>0$, there is $\tau>0$ such that 
$$
|P(t)|< \beta / K, \quad \mbox{for} \quad |t| < \tau.
$$
Hence, by Lemma \ref{Regularidadeinfinito},
$$
||v_{n}||_{C(\overline{B_{1}(z)})}< \beta \quad \mbox{for} \quad |z| \geq R \quad \mbox{and} \quad  n \in \mathbb{N}.
$$
This proves the corollary. \end{proof}

\vspace{0.5 cm}

Finally we are ready to show the concentration of maxima. \\

\noindent {\bf Concentration of maxima:} \\

From Corollary \ref{uniformementec0}, there is $z_{n}\in\R^{N}$ such that $|v_{n}(z_{n})|=\max_{x \in \R^{N}} |v_{n}(x)|$. Now, applying Corollary \ref{uniformementeparazero}, there exists $\delta>0$ such that $|v_{n}(z_{n})|\geq\delta$ for all $n\in\N$, implying that $(z_{n})$ is bounded. Therefore if $\xi_{n}:=z_{n}+y_{n}$, it follows that 
$$
|u_{n}(\xi_{n})|=\max_{x \in \R^{N}}|u_{n}(x)|
$$
and 
$$
\epsilon_{n}\xi_{n}=\epsilon_{n}z_{n}+\epsilon_{n}y_{n}\rightarrow 0+z=z
$$
with $z \in \mathcal{A}$, finishing the study of the concentration phenomena.

\vspace{0.5cm}
\noindent \textbf{Acknowledgment}: The authors warmly thank the anonymous referee for his/her useful and nice comments on the paper.

\end{document}